\newcommand{\K}{{\mathbb K}}
\begin{document}
\theoremstyle{plain}
\newtheorem{theorem}{Theorem}[section]
\newtheorem{lemma}{Lemma}[section]
\newtheorem{proposition}{Proposition}[section]
\newtheorem{corollary}{Corollary}[section]
\newtheorem{example}{Example}[section]
\theoremstyle{definition}
\newtheorem{notations}[theorem]{Notations}
\newtheorem{notation}[theorem]{Notation}
\newtheorem{remark}[theorem]{Remark}
\newtheorem{question}[theorem]{Question}
\newtheorem{definition}[theorem]{Definition}
\newtheorem{condition}[theorem]{Condition}
\let\pf\proof
\let\epf\endproof
\numberwithin{equation}{section} 

\newcommand{\Keywords}[1]{\par\noindent {\small \textit{Keywords}\/}: #1}
\newcommand{\amsclass}[1]{\par\noindent {\small \textit{MSC(2020)}\/}: #1}

\title[]{Stacks in Einstein Gravity  and a stacky equivalence of 3D quantum gravity with gauge theory}

\author[Kadrİ İlker Berktav]{Kadrİ İlker Berktav}

\thanks{K. İ. Berktav is a postdoctoral researcher at the Institute of  Mathematics, University of Zurich,  Switzerland; e-mail: kadriilker.berktav@math.uzh.ch; K. İ. B. acknowledges support of T\"{U}BİTAK/2219-Program Grant. 
}

\maketitle

\begin{abstract}

	In this paper, we examine stacky structures in Einstein's theory of gravity.
	In brief, we  first give a construction of the \textit{moduli stack of solutions to (vacuum) Einstein field equations} on $n$-dimensional spacetimes, with vanishing cosmological constant. 
	Using a similar approach, we also study Einstein's gravity on families of manifolds and define another  stack encoding this situation as well. Secondly, we focus on the gauge theoretical interpretation of 3D gravity and the concept of equivalence of 3D quantum gravity with gauge theory. By \textit{equivalence}, we essentially mean the existence of an isomorphism between the phase spaces of 3D gravity and the associated gauge theory. In this regard, we show that once it exists, the equivalence induces an isomorphism between the corresponding moduli stacks.

\vspace{0.5cm}

\Keywords{Stacks and moduli problems, derived algebraic geometry, 3D Einstein gravity, Cartan's formalism, 3D quantum gravity}
\amsclass{14A20, 14A30, 14D23, 18F20, 70S15, 81T35, 83C99}

\end{abstract}

\tableofcontents
\section{Introduction}\label{intro on 2+1 gravity}

 Derived algebraic geometry (DAG) essentially provides a new setup to deal with non-generic situations  in geometry (e.g. non-transversal intersections and ``bad" quotients). To this end, it combines higher categorical objects and homotopy theory with many tools from homological algebra. Hence, roughly speaking, it can be viewed as \emph{a higher categorical/homotopy theoretical refinement of classical algebraic geometry}. In that respect, DAG offers  new ways of organizing information for various purposes. Therefore, it has many interactions with other mathematical domains. For a survey of some directions, we refer to \cite{Anel,Toen}.

 Regarding physics-related problems, for example, \cite{Cosv2} studies gauge theories and factorization algebras in the context of DAG. 
 In \cite{Benini},  a  stacky formulation of Yang–Mills fields on Lorentzian manifolds is presented. \cite{Benini2} examines higher structures in algebraic quantum field theory.  \cite{GradyPavlov,LudewigStoffel} focus on geometric functorial field theories.
 
 Inspired by the aforementioned formulations, in this paper, we present a similar type of analysis in the case of certain Einstein gravities, and  investigate its possible consequences.  For instance, we study 3D Einstein-Cartan-Palatini gravity theory and 3D quantum gravity using the language of stacks. Our constructions, in fact, employ some techniques from Hollander's homotopy theory of stacks \cite{Hollander}. 

The current work is centered around the fact that the phase spaces that we are mostly interested in have the structure of a \textit{groupoid}, rather than a \textit{set}. To be more specific,  for  ordinary field theories, the collection of fields have the structure of a \emph{set}, and hence two fields $f,f'$ are said to be the \textit{same} if and only if the equation $f=f'$ holds set theoretically. However, for gauge theories,  two gauge fields $A, A'$  are the ``same'' if there exists a gauge transformation $g:A\rightarrow A'$ relating them. Due to the this extra datum, points in the corresponding phase space naturally form a groupoid. I.e. the data should include the points (the fields), along with invertible (gauge) transformations between them.  Consequently, the phase space of a gauge theory turns out to be a ``higher space'' (called a \emph{stack}) rather than an ``ordinary space''. More details can be found in \cite{Benini, Benini2}.

Of course, one can naturally ask for similar kinds of relations between gauge transformations themselves. For instance, if there are gauge transformations between gauge transformations, then the underlying structure of the collection of points will be encoded by ``2-groupoids''. One can play the same game for these ``2nd level transformations'' and ends up with 3-groupoids, and so on...Using a higher categorical dictionary, this essentially leads to the notion of an \textit{infinite tower of equivalences}. Therefore, if we allow higher symmetries in  gauge theory, the natural framework will be encoded by  $\infty$-groupoids, and hence the corresponding phase space becomes a \emph{higher stack.} For more details, we again refer to \cite{Benini,Benini2}.

\newpage
It should be clear by now why it is natural to investigate similar structures in Einstein's theory of general relativity: \emph{Once symmetries are involved as a part of the data, one should interpret phase spaces as higher spaces, rather than just ordinary spaces.} This can eventually lead to a new way of formalizing the data and make certain higher algebraic tools available.  In this paper, we only consider  ``first'' level symmetries of the theory. Therefore, stacks naturally enter the picture, and they are good enough to encode the underlying structure of the phase space. In short, \emph{stacks are good enough for our purposes}, and so we concentrate on stacky constructions. 

Note that in Einstein's theory of GR, we consider pseudo-Riemannian metrics over some base space $U$ as \emph{fields}. So, the metric is the fundamental object of study encoding geometric and gravitational features of the spacetime. Moreover, regarding the symmeties in this context, the (action of) theory itself is invariant under spacetime diffeomorphisms. Therefore, for a pair $(g,g')$ of fields, for instance, one can define \emph{an invertible morphism} $\phi: g \rightarrow g'$ by using the pull-back operation, where $\phi$ can be determined by an element in the group of diffeomorphims of the underlying spacetime. More generally, viewing any such $g$ as a section of the bundle $Sym^2(T^*U)$, one may use certain automorphisms of the bundle $Sym^2(T^*U)$ and their actions on $g$ to describe  invertible morphisms between fields. Therefore, in either case, the corresponding phase space will have the structure of a \emph{groupoid}. In fact, elaborating the last statement will be one of the main objectives in this paper. 

\textbf{Main results and the outline.} Now, let us summarize our results. In this paper, using the homotopy theory of stacks \cite{Hollander}, we first give an elementary construction of the so-called \emph{moduli stack of vacuum Einstein gravities on Lorentzian spacetimes with vanishing cosmological constant}. More precisely, we prove the following result.
\begin{theorem}\label{THM 1}
	Given a  Lorentzian $ n $-manifold $M$, let $\mathcal{C}$ be the category of open subsets of  $M$ that are diffeomorphic to $\mathbb{R}^n$, 
	with morphisms being canonical inclusions between open subsets whenever $U\subset V$. Then the presheaf $\mathcal{E}\in PSh(\mathcal{C}, Grpds)$
	\begin{equation*}
	\mathcal{C}^{op} \longrightarrow Grpds, \ \ U \mapsto  \mathcal{E} (U)
	\end{equation*} is a stack of   Ricci-flat Lorentzian metrics on $\mathcal{C}$, where for an object $U$ of      $\mathcal{C}$, objects  of $ \mathcal{E}(U) $ form the set $ Ob(\mathcal{E}(U)):=\big\{ g\in\Gamma(Sym^2(T^*U)): Ric(g)=0 \big\}$, and a morphism in $ \mathcal{E}(U) $ is determined by an automorphism of $ Sym^2(T^*U) $. 
\end{theorem}Here, $Grpds$ denotes the 2-category of groupoids. Roughly speaking, $\mathcal{E}$ is a prestack \textit{(a presheaf of groupoids)} that preserves certain structures and possesses the descent property with respect to the underlying site structure on $\mathcal{C}$. 
The  precise description of  $\mathcal{E}$ as a prestack is given in Lemma \ref{main result on being pre-stack}, while  the descent property and the site structure are discussed in $\S$ \ref{stacky} (in the poof of Theorem \ref{THM 1}). 
\newpage
Theorem \ref{THM 1} provides  a suitable stack that in fact captures \emph{the contravariance and locality} behaviors of the Ricci-flat geometric structure on the underlying manifold $M$. On the other hand, in the context of moduli theory,  it  is natural to study \textit{smoothly varying families of manifolds} as well. Therefore, we also investigate Ricci-flat Lorentzian metrics on families of manifolds and define a new stack encoding this situation. 

To be more specific, we require  geometric structures to vary in families, parametrized over cartesian spaces. In brief, this can be achieved  by replacing the category $\mathcal{C}$ in Theorem \ref{THM 1} by the site $Fam_n$ of families of manifolds, where its objects are submersions $\pi: M \rightarrow S, $ with $n$-dimensional fibers, and morphisms are fiberwise open embeddings. With this modification, we prove the following result:
\begin{theorem}\label{THM 1.1}
	Let $Fam_n$ be the site of families of manifolds (with $n$-dimensional fibers). Denote an object of $Fam_n$ by $ M/S$. Then the presheaf $ \mathcal{E}^{fam}$ on $Fam_n$
	\begin{equation*}
	Fam_n^{op} \longrightarrow Grpds, \  M/S \mapsto  \mathcal{E}^{fam} (M/S),
	\end{equation*} is a  stack, where $ Ob(\mathcal{E}^{fam} (M/S)):=\{g\in \Gamma(Sym^2(T^*(M/S))) :  Ric(g)=0 \} $, and  morphisms are determined by certain automorphisms of $ Sym^2(T^*(M/S))$.  Note that $ T^*(M/S)$ denotes the relative cotangent bundle $ T^*(M/S)=Coker(T^*S\rightarrow T^*M). $
\end{theorem}
Last but not least, we revisit 3D gravity and its gauge theoretical interpretation \textit{(namely, the Einstein-Cartan-Palatini formulation)} in a particular setup. In this framework, we also examine  the so-called ``equivalence'' of 3D quantum gravity with gauge theory. Our setup in fact consists of  vacuum 3D Einstein gravity (with vanishing cosmological constant $\Lambda$) on  Lorentzian spacetimes of the form $M:=\Sigma\times \mathbb{R}$, where $\Sigma$ is a closed Riemann surface of genus $g>1$. Let us denote  this theory by $GR_{\Lambda=0}^{3D}(M)$ and  the corresponding gauge theory by $CS^{3D}_{ISO(2,1)}(M)$. 

By  \emph{equivalence}, we essentially mean the existence of an isomorphism between the phase spaces of these theories 
$$Mod(GR_{\Lambda=0}^{3D}(M)) \xrightarrow{\sim} Mod(CS^{3D}_{ISO(2,1)}(M)),$$ 
which sends a flat pseudo-Riemannian metric $g$ to the corresponding flat gauge field $A^g$.
More details will be discussed in Sections \ref{section_overview GR} and \ref{stacky equivalence}, but the upshot is that once there exists such an equivalence, one can construct a natural stack isomorphism between the stacks of these theories. Here, by a \emph{stack of a theory}, we mean the moduli stack of solutions to the corresponding field equations of the theory under consideration. In this regard, we prove the following result.
\begin{theorem}\label{THM-2}
	Suppose that $M=\Sigma\times (0,\infty)$ is a Lorentzian 3-manifold, where $\Sigma$ is a closed Riemann surface of genus $g>1$. Let $\mathcal{E}$ and $\mathcal{M}$ denote the moduli stacks of $ GR_{\Lambda=0}^{3D}(M) $ and $ CS^{3D}_{ISO(2,1)}(M)$, respectively. Then there exists an induced invertible natural transformation 
$ 	\Phi: \mathcal{E} \Rightarrow\mathcal{M}. $

\end{theorem}

Now, let us outline the remainder of this paper. $\S$\ref{background} includes  preliminaries. It begins with outlining some key ideas from Hollander's work \cite{Hollander} on the homotopy theory of stacks. In $\S$ \ref{section_overview GR}, we review 3D gravity to some extent and its gauge theoretical interpretation. 
In $\S$ \ref{prestack}, we first present an elementary construction of the moduli prestack of  Einstein gravity (cf. Lemma \ref{main result on being pre-stack}). Then we give the proof of  Theorem \ref{THM 1} 
using the homotopy theory of stacks. In $\S$ \ref{section_proof on families}, we explain the content of Theorem \ref{THM 1.1} in more detail 
and give a sketch of the proof. Finally,  $\S$ \ref{stacky equivalence} provides the proof of Theorem \ref{THM-2}.
\vspace{3pt}

\paragraph{\bf Acknowledgments} 

 It is a pleasure to thank Alberto Cattaneo, Ödül Tetik and Neeraj Deshmukh for helpful discussions and useful comments. I also thank Ödül Tetik for many discussions about geometric structures in view of stacks. 
 I would also like to thank the Institute of Mathematics, University of Zurich, where some parts of this paper were revisited and improved. The author is supported by the Scientific and Technological Research Council of Turkey (T\"{U}BİTAK) under 2219-International Postdoctoral Research Fellowship Program (2021-1).



 

\section{Background material} \label{background}

\subsection{Background from the homotopy theory of stacks}
In this section, we outline the basics of homotopy theory of stacks, present some material relevant to this paper, and state some useful results from \cite{Benini,Hollander}.

It is very well-known that by Yoneda's embedding,  one can  realize algebro-geometric objects (like schemes, stacks, derived ``spaces", etc...) as \textit{functors} in addition to the standard ringed-space formulation. 
In brief, we have the following enlightening diagram from \cite{Vezz2} encoding such a functorial interpretation: 
\begin{equation} \label{the diagram}
	\begin{tikzpicture}
	\matrix (m) [matrix of math nodes,row sep=2em,column sep=4em,minimum width=5 em] {
		CAlg_{\mathbb{K}}   & Sets  \\
		&  Grpds \\
		cdga_{\mathbb{K}}^{\leq0} &  Ssets \\};
	\path[-stealth]
	(m-1-1) edge  node [left] {{\small $ $}} (m-3-1)
	edge  node [above] {{\small schemes}} (m-1-2)
	(m-1-1) edge  node [below] {} node [below] {{\small stacks}} (m-2-2)
	(m-1-1) edge  node [below] {} node [below] {{\small $ n $-stacks}} (m-3-2)
	(m-3-1) edge  node  [below] {{\small derived stacks}} (m-3-2)
	
	(m-1-2) edge  node [right] {{\small }} (m-2-2)
	(m-2-2) edge  node [right] {{\small }} (m-3-2);
	\end{tikzpicture}
\end{equation} Here $ CAlg_{\mathbb{K}} $ denotes the category of commutative $\K$-algebras. Denote by $St_{\K}$ the $\infty$-category of (higher) $\K$-stacks, where objects in $St_{\K}$, roughly speaking, are defined via  Diagram \ref{the diagram} as certain functors, with nice geometric properties. 

Diagram \ref{the diagram} may provide further information about (higher) spaces under consideration. In the case of schemes, for instance, such a functorial description implies that the points of a scheme form a \textit{set}. Likewise, it implies that the collection of points of a stack has the structure of a \textit{groupoid}, and not that of a set. These kinds of interpretations, in fact, suggest the name ``functor of points$ " $. In brief, the right hand side of the diagram in fact encodes the structure of points.

 The RHS of  Diagram \ref{the diagram} also captures the level of symmetries and leads to the different ways of organizing the moduli data. That is, the RHS is also about \emph{how to test two objects being the same}. On the other hand, the LHS of the diagram  encodes the change in the \emph{local algebraic models} of (higher) spaces. 



In this paper, we work within the context of Hollander's theory of stacks \cite{Hollander}. In what follows,   we just  intend to give a brief sketch for the objects and constructions that we will be mostly interested in. We essentially follow \cite{Benini, Hollander}.


The punchline of the work \cite{Hollander} is that the homotopy theoretical approach essentially encodes descent properties of a stack in a rather compact way. It is in fact based on \textit{the model structure}  on the 2-category $Grpds$ of groupoids and that on the category $PSh(\mathcal{C},Grpds)$ of presheaves in groupoids. 
From \cite{Benini,Hollander}, we have the following definition/theorem, which allows us to formulate the classical notion of a Deligne-Mumford stack in the language of homotopy theory. 

\begin{definition} \label{definition of stack}
	Let $\mathcal{C} $ be a site. A \emph{stack} is a presheaf of groupoids $ \mathcal{X}:\mathcal{C}^{op}\longrightarrow Grpds $ such that for each covering family $ \{U_i\rightarrow U\} $ of $U$, the canonical morphism \begin{equation*}
	\mathcal{X}(U)\longrightarrow holim_{ {\footnotesize Grpds}} \big( \mathcal{X}(U_{\bullet}) \big)
	\end{equation*} is a weak equivalence  in $Grpds$,  where 
	\begin{equation*}
	\mathcal{X}(U_{\bullet}):= \bigg(\prod_i\mathcal{X}(U_i)\mathrel{\substack{\textstyle\rightarrow\\[-0.1ex]
			\textstyle\rightarrow \\[-0.1ex]}} \prod_{ij}\mathcal{X}(U_{ij})
	\mathrel{\substack{\textstyle\rightarrow\\[-0.1ex]
			\textstyle\rightarrow \\[-0.1ex]
			\textstyle\rightarrow\\[-0.3ex]}} \prod_{ijk}\mathcal{X}(U_{ijk})
	\mathrel{\substack{\textstyle\rightarrow\\[-0.1ex]
			\textstyle\rightarrow \\[-0.1ex]
			\textstyle\rightarrow \\[-0.1ex]
			\textstyle\rightarrow \\[-0.3ex]}} 
	\cdot\cdot\cdot \bigg)
	\end{equation*}  is the cosimplicial diagram in $Grpds$, and $U_{i_1 i_2 ...i_m}$ denotes the fibered product of $U_{i_n}$'s in $U$, that is 
	$ U_{i_1 i_2 ... i_m}:=U_{i_1}\times_U U_{i_2} \times_U \cdot \cdot \cdot \times_U U_{i_m}. $
	
\end{definition}
Definition \ref{definition of stack} essentially says that \emph{a stack} is a fibrant object $\mathcal{X}$ in the local model structure on $PSh(\mathcal{C},Grpds)$. Equivalently, \emph{a stack} can be viewed as a category fibered in groupoids over $\mathcal{C}$ satisfying some descent properties.

\begin{remark}
	\begin{enumerate}
		\item []
		\item The weak equivalences in Definition \ref{definition of stack} are those morphisms in $Grpds$ which are fully faithful and essentially surjective. Thus, for a stack $\mathcal{X}$, the canonical map above is an equivalence of categories.
		\item We will not describe  the notion of $``ho(co)lim_{Grpds} " $ in full detail. For a complete construction of this item,  we refer to \cite[$\S$ 2]{Hollander}. The following lemma, on the other hand, does provide an explicit characterization of $`` holim_{Grpds}(\cdot) " $ as a particular groupoid. Thus, we simply define the \textit{homotopy limit} $ holim_{Grpds}(X_{\bullet})$ via this characterization.
	\end{enumerate}
\end{remark} 

\begin{lemma} \cite[ Corollary 2.11]{Hollander} \label{holim as groupoid} 
	Given a cosimplicial diagram  $X_{\bullet} $ in $Grpds$ of the form 
	\begin{equation*}
	X_{\bullet}= \bigg(X_0\mathrel{\substack{\textstyle\rightarrow\\[-0.1ex]
			\textstyle\rightarrow \\[-0.1ex]}} X_1
	\mathrel{\substack{\textstyle\rightarrow\\[-0.1ex]
			\textstyle\rightarrow \\[-0.1ex]
			\textstyle\rightarrow\\[-0.3ex]}} X_2
	\mathrel{\substack{\textstyle\rightarrow\\[-0.1ex]
			\textstyle\rightarrow \\[-0.1ex]
			\textstyle\rightarrow \\[-0.1ex]
			\textstyle\rightarrow \\[-0.3ex]}} 
	\cdot\cdot\cdot \bigg), 
	\end{equation*}
	where each $X_i$ is a groupoid, then $ holim_{Grpds}(X_{\bullet}) $ is a \textit{groupoid} for which
	\begin{enumerate}
		\item [\textit{(i)}] \textit{\textbf{objects}} are the pairs $(x,h)$, where $x$ is an object in $X_0$, $h:d_1^1(x)\rightarrow d_0^1(x)$ is a morphism in $X_1$ such that 
		\begin{align}
		& (a) \ \  s_0^0(h)=id_x, \label{key lemma objects property a}\\
		& (b) \ \  d_0^2 \circ d_2^2 (h)=d_1^2(h). \label{key lemma objects property b}
		\end{align}
		Note that  $x$ and $h$ can be realized as  0- and 1-simplicies in $ X_{\bullet}, $ respectively, such that, by using the properties of $d_i^n$ and $s_j^n$, those conditions correspond to the commutativity of the diagram  
		\begin{equation*}
		\begin{tikzpicture}
		\matrix (m) [matrix of math nodes,row sep=3em,column sep=4em,minimum width=2em] {
			d_2^2 \circ d_1^1 (x) & d_2^2 \circ d_0^1 (x)=d_0^2 \circ d_1^1 (x) & d_0^2 \circ d_0^1 (x)\\
			d_1^2\circ d_1^1(x)&  & d_1^2 \circ d_0^1(x),\\};
		\path[-stealth]
		(m-1-1) edge  node [right] {{\footnotesize $ ``="$ }} (m-2-1)
		edge  node [above] {{\small $ d_2^2(h) $}} (m-1-2)
		(m-1-2) edge  node [above] {{\small $d_0^2(h)$}} (m-1-3)
		(m-1-1.east|-m-2-1) edge  node {} node [above] {{\small $ d_1^2(h) $}} (m-2-3)
		(m-1-3) edge  node [right] {{\footnotesize $``="$ }} (m-2-3);
		
		\end{tikzpicture}
		\end{equation*}
		and hence we diagrammatically have 
		\begin{equation*}
		\setlength{\unitlength}{0.9cm}
		\begin{picture}(12,4)
		\thicklines
		\put(9,3){\circle*{0.1}}
		\put(8.3,3.3){\small $d_0^2 \circ d_0^1 (x)$}
		\put(8,1){\circle*{0.1}}
		\put(6.3,0.4){\small $d_2^2 \circ d_1^1 (x)$}
		\put(10,1){\circle*{0.1}}
		\put(10.2,0.5){\small $d_2^2 \circ d_0^1 (x)$}
		\put(8,1){\line(1,0){2}}
		\put(8,1){\line(1,2){1}}
		\put(10,1){\line(-1,2){1}}
		\put(8.7,0.5){{\small $ d_2^2(h) $}}
		\put(7.2,1.9){{\small$ d_1^2(h) $}}
		\put(9.8,1.9){{\small $d_0^2(h) $}}
		
		\put(1,2.2){$\bullet$}
		\put (0.6,2){$ x $}
		\qbezier(1.4,2.7)(2,3.5)(2.8,3.1)
		\put(2.8,3.1){\vector(1,0){0.1}}
		\qbezier(1.5,2)(2,1.1)(4.8,1.5)
		\put(4.8,1.5){\vector(1,1){0.1}}

		\put(3.6,2.3){\vector(-1,0){2}}
		\put(2.8,1.8){\small $  s_0^0 $}
		
		\put(3.1,3){\line(3,-2){2}}
		\put(4.1,2.5){\small$ h $}
		\put(3.1,3.2){\small$d_1^1(x)$}
		\put(3.1,3){\circle*{0.1}}
		\put(5.2,1.90){\small $d_0^1(x)$}
		\put(5.1,1.66){\circle*{0.1}}
		\qbezier(4.8,2.8)(6,4.5)(8,2.5)
		\put(8,2.5){\vector(1,-1){0.1}}
		\put(5.5,3.8){ \small $ d_1^2 $}
		
		\end{picture}
		\end{equation*}
		\item [\textit{(ii)}] \textit{\textbf{morphisms }}are the arrows of pairs $(x,h) \rightarrow (x',h')$ that consist of a morphism $ f:x\rightarrow x' $ in $X_0$ such that the following diagram commutes.
		\begin{equation*}
		\begin{tikzpicture}
		\matrix (m) [matrix of math nodes,row sep=3em,column sep=4em,minimum width=2em] {
			d_1^1(x) & d_1^1(x')\\
			d_0^1(x) & d_0^1(x')\\};
		\path[-stealth]
		(m-1-1) edge  node [left] {{\small $ h$}} (m-2-1)
		edge  node [above] {{\small $ d_1^1(f) $}} (m-1-2)
		(m-2-1.east|-m-2-2) edge  node [below] {} node [below] {{\small $ d_0^1(f) $}} (m-2-2)
		(m-1-2) edge  node [right] {{\small $h'$}} (m-2-2);
		\end{tikzpicture}
		\end{equation*}
		Here, $ d_i^n $'s are in fact covariant functors between groupoids.
	\end{enumerate}
\end{lemma}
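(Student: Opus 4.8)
\emph{Proof proposal.} The plan is to take Hollander's explicit model for the homotopy limit of a cosimplicial diagram of groupoids and unpack it in the present situation; indeed the statement is essentially \cite{Hollander}, Corollary 2.11, so the task reduces to translating the defining totalization into the concrete groupoid described in \textit{(i)}--\textit{(ii)}. Recall (see \cite{Hollander}, \S2) that $holim_{Grpds}(X_{\bullet})$ is computed as a totalization: up to equivalence, an object is a coherent family of functors $\Phi_n\colon [n]\to X_n$ (one for each object $[n]$ of $\Delta$) compatible with the cosimplicial structure maps, and a morphism is a compatible family of natural transformations between two such families. Since we are mapping into \emph{groupoids}, whose nerves are $2$-coskeletal, such a family $\Phi_{\bullet}$, and likewise a morphism of such, is completely \emph{and freely} determined by its components in cosimplicial degrees $\le 2$, subject only to the relations induced by the coface and codegeneracy maps $d_i^n,s_j^n$ with $n\le 2$ recorded in Lemma \ref{relations}. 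This finiteness is precisely the point at which the construction of \cite{Hollander} is really used.

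First I would analyse the objects. The component $\Phi_0\colon [0]\to X_0$ amounts to a choice of object $x\in Ob(X_0)$. Naturality against the two cofaces $d_0^1,d_1^1\colon [0]\to [1]$ pins down the endpoints of $\Phi_1\colon [1]\to X_1$, forcing it to be a single morphism $h\colon d_1^1(x)\to d_0^1(x)$ in $X_1$. Naturality against the codegeneracy $s_0^0\colon [1]\to [0]$, together with the identity $s_0^0\circ d_0^1=s_0^0\circ d_1^1=\mathrm{id}$ from Lemma \ref{relations}, forces $s_0^0(h)=\mathrm{id}_x$, which is condition \ref{key lemma objects property a}. In degree $2$: naturality against the three cofaces $d_0^2,d_1^2,d_2^2\colon [1]\to [2]$ identifies the edges $0\to 1$, $1\to 2$, $0\to 2$ of $\Phi_2\colon [2]\to X_2$ with $d_2^2(h)$, $d_0^2(h)$, $d_1^2(h)$ respectively — the matching of sources and targets being guaranteed by the identities $d_0^2 d_1^1 = d_2^2 d_0^1$, etc., of Lemma \ref{relations} — and \emph{functoriality of $\Phi_2$ on the composable pair $0\to 1\to 2$} then reads $d_0^2(h)\circ d_2^2(h)=d_1^2(h)$, the cocycle condition \ref{key lemma objects property b}; the degree-$2$ codegeneracies impose nothing new, again by Lemma \ref{relations}. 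Finally, $2$-coskeletality guarantees that the components $\Phi_n$ with $n\ge 3$ exist, are uniquely determined by the pair $(x,h)$, and carry no further constraint. Hence the objects of $holim_{Grpds}(X_{\bullet})$ are exactly the pairs $(x,h)$ satisfying \ref{key lemma objects property a}--\ref{key lemma objects property b}, as claimed, and the geometric picture in the statement is just the depiction of $\Phi_2$.

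Then I would run the same bookkeeping one categorical level down for morphisms. A morphism $\Phi_{\bullet}\Rightarrow\Phi_{\bullet}'$ has degree-$0$ component a morphism $f\colon x\to x'$ of $X_0$, and naturality in the cosimplicial variable forces the degree-$1$ component to be the pair $\big(d_1^1(f),d_0^1(f)\big)$, whose compatibility with $h$ and $h'$ is exactly the commuting square required in \textit{(ii)}; the higher components are again forced by coskeletality. Composition and identities are inherited from $X_0$, so this defines a category; and it is a \emph{groupoid} because $X_0$ is: $f$ is invertible, and, $d_0^1$ and $d_1^1$ being functors, the square for $(x,h)\to(x',h')$ inverts to one for $(x',h')\to(x,h)$.

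The step I expect to be the genuine obstacle is exactly this coskeletality/truncation claim — that no data and no constraint survives in cosimplicial degree $\ge 3$ and that the codegeneracies beyond degree $1$ contribute nothing — since it is the point at which one must either invoke, or re-derive via the cobar/Reedy analysis of \cite{Hollander}, the precise form of $holim_{Grpds}$. Everything else amounts to careful bookkeeping with the simplicial identities of Lemma \ref{relations} and with the variances of the $d_i^n$, $s_j^n$: routine, but to be carried out attentively so that the conditions land in exactly the stated form.
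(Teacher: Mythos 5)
Your proposal is correct in outline, but it does genuinely more than the paper does: the paper offers no proof of this lemma at all. It imports the statement verbatim as \cite{Hollander}, Corollary 2.11, and in the remark immediately following it even promotes the description in \textit{(i)}--\textit{(ii)} to a working \emph{definition} of $holim_{Grpds}$ for readers unwilling to open \cite{Hollander}. You instead unpack the totalization model: objects as coherent families $\Phi_n\colon[n]\to X_n$, with $\Phi_0$ giving $x$, $\Phi_1$ giving $h\colon d_1^1(x)\to d_0^1(x)$, the codegeneracy naturality giving \ref{key lemma objects property a}, functoriality of $\Phi_2$ on the composable pair $0\to 1\to 2$ giving the cocycle condition \ref{key lemma objects property b} (and you correctly read the paper's slightly abusive notation $d_0^2\circ d_2^2(h)$ as the composite $d_0^2(h)\circ d_2^2(h)$ in $X_2$), and $2$-coskeletality of nerves of categories to kill everything in degree $\geq 3$; the morphism-level bookkeeping and the invertibility argument are likewise sound. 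What your route buys is an actual explanation of \emph{why} the descent data truncates at level $2$ and where each of the two conditions comes from; what it costs is that the one step you flag as the genuine obstacle --- that $holim_{Grpds}$ is computed by this (Reedy-fibrant) totalization and that the coskeletality argument applies --- is still outsourced to \cite{Hollander}, i.e.\ to exactly the same source the paper cites for the whole statement. So there is no gap relative to the paper's standard of rigor, only a finer-grained reliance on the same external result.
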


\subsection{Overview of 3D gravity, Cartan's formalism, and 3D quantum gravity} \label{section_overview GR}
In this section, we briefly discuss 3D Einstein gravity, infinitesimal symmetries, the Cartan formalism, and some aspects of 3D quantum gravity (and its relation with gauge theory). As each subject itself is quite dense, we can only present some key ideas and results from the literature that are relevant to our goals. But, we  try to provide a list of accessible references on each subject. 

\subsubsection{\textbf{Basics of 3D gravity}} In GR, the metric tensor is the fundamental field of study. In the context of the usual metric formalism in three dimensions,  we consider \emph{the standard  Einstein-Hilbert action} for the metric \begin{equation}\label{EH action}
\mathcal{I}_{EH}[g]:= \kappa\displaystyle \int_M \mathrm{dx^3}R \sqrt{-det(g)},
\end{equation}where $\kappa$ is a constant, $R$ is the Ricci scalar, $g$ is the metric tensor field, and $ det(g) $ denotes the determinant of the metric tensor matrix. Then, the vacuum Einstein field equations, with cosmological constant $\Lambda=0$, are given as 
\begin{equation} \label{eqn_vacumm Einstein equation}
R_{\mu \nu}-\frac{1}{2}g_{\mu \nu}R=0.
\end{equation}Observe that after contracting with $g^{\mu \nu}$, one has $R=0$. Therefore, it follows directly from substituting this back into Equation (\ref{eqn_vacumm Einstein equation}) that the moduli space $\mathcal{EL} $ of solutions to those field equations turns out to be the \textit{moduli space $\mathcal{E}(M)$ of Ricci-flat ($R_{\mu \nu}=0$) Lorentzian metrics on $M$}. In other words, it is just the moduli  of \textit{flat geometric structures on $M$}. With this interpretation in hand, one can equivalently say that Lorentzian spacetime is locally modeled on ($ ISO(2,1), \mathbb{R}^{2+1} $), where  $ \mathbb{R}^{2+1} $ denotes the usual Minkowski spacetime  \cite{Carlip, Mess}. 

It should also be noted that,   Weyl tensor in 3D is identically zero. Then the Riemann tensor can locally be expressed in terms of $R$ and $ R_{\mu \nu} $,  and so we locally have $R_{\mu \nu \sigma \rho}=0$ as well. That is, \textit{any solution of the vacuum  Einstein field equations in 3D, with vanishing cosmological constant,  is locally flat}.

\subsubsection{\textbf{Symmetries in the context of Lagrangian formalism}} 
The Hamilton's action principle  allows us to study identities and conserved quantities from the symmetries of the corresponding Lagrangian, and hence invariance properties of the action under certain transformations. This approach applies not only to the trajectories of individual particles in classical mechanics, but also works for continuous fields like $g_{\mu\nu}$. 

In Newtonian mechanics, there is a translation-invariance, which leads to a conserved momentum. In GR, on the other hand, the Einstein-Hilbert action is diffeomorphism-invariant, which essentially leads to the contracted Bianchi identity \begin{equation*}
\nabla_{\mu} G^{\mu \nu}=0, \text{ where } G^{\mu \nu}= R^{\mu \nu}-\frac{1}{2}g^{\mu \nu}R.
\end{equation*}

Let us examine the types of transformations we are considering in the above cases: For the trajectories of particles $ \textbf{q}=\{q_i(t)\} $, with the action $$\displaystyle \int_I L(t,\textbf{q}, \dot{\textbf{q}})\mathrm{d}t,$$ we consider an infinitesimal symmetry operation $q_i(t) \rightarrow q_i(t) + \delta q_i(t)$. Here, the components $ \delta q_i(t) $ for the \textit{variation} $\delta \textbf{q}$ of the trajectory $\textbf{q}(t)$ can be described by a vector field $ \xi=\xi^{\mu}\partial_{\mu}$, which controls the deformation of the original trajectory. One can verify that if $ q_i(t) $ satisfies the corresponding Euler-Lagrange equation, so does $ q_i(t) + \delta q_i(t)$. For more details, we refer to \cite{Bertschinger}.

The same idea can apply to  variations of the continuous fields. For the case of Einstein-Hilbert action, we consider its change under transformations of the form 
\begin{equation}
g_{\mu \nu} (x) \rightarrow g_{\mu \nu} (x)+\delta g_{\mu \nu} (x).
\end{equation}  The Lagrangian in this case is chosen so that the action $ \mathcal{I}_{EH}[g] $ is invariant under the transformation above \emph{for the metrics satisfying Einstein field equations.}

It should be noted that the variations above are not necessarily generated by diffeomorphisms. However, to capture the diffeomorphis-invariant nature of GR, we consider certain types of variations induced by \textit{infinitesimally generated diffeomorphisms}, by which we mean diffeomorphisms that are generated by a vector field $X$. In that case, we call $X$ the \textit{infinitesimal generator} of the corresponding transformation.

\begin{remark}\label{Remark_a variation induced from an infinitesimal diffeomorphism}
	Recall that any vector field defines a one-parameter group of diffeomorphisms via its local flow. Using an infinitesimal diffeomorphism $ \phi^X $(and hence the corresponding flow), one can examine how the metric tensor field $g_{\mu \nu}$ changes when it is pulled back along the integral curves of $X$. Notice that this is exactly what the Lie derivative $\mathcal{L}_Xg_{\mu \nu}$ measures! Therefore, we introduce the following definition. \begin{definition}
		By a\textit{ variation induced from an infinitesimal diffeomorphism $ \phi^X $},  we actually mean 
	\begin{equation}
	\delta g_{\mu \nu}:=\mathcal{L}_X g_{\mu \nu},
	\end{equation} with the transformation $  g_{\mu \nu} (x) \rightarrow g_{\mu \nu} (x)+\mathcal{L}_X g_{\mu \nu} (x).$
	\end{definition}
\end{remark}
\subsubsection{\textbf{Cartan's formalism and gauge theoretic interpretation of 3D gravity}}\label{cartan formalism}

In this section, we  outline Cartan's formalism. For more details, we refer to \cite{Carlip,KKR, Wit1}. In a nutshell, Cartan's formalism consists of the following data: 

\begin{enumerate}
	\item  A section $e^a_i$ of the orthonormal frame bundle $LM$ over $M$ for each $i$. That is, \begin{equation*}
	e^a_i \in \Gamma(M,LM),
	\end{equation*} where $i$ labels the \textit{space indices} with respect to the local  chart $\big(U_i, x\big)$ around a point $p \in M$, and $a$'s are called \textit{Lorentz indices} labeling vectors in the orthonormal basis $\{e^1_i,e^2_i,...,e^{dimM^2}_i\}$ over $U_i.$ Here, each fibre  of $LM$ \begin{equation*}
	LM_p=\big\{\big(e^1_i(p),...,e^m_i(p)\big) : e^1_i(p),...,e^m_i(p) \ \text{forms a  basis  for} \ TM_p \big\}
	\end{equation*} 
	is isomorphic to $GL(n,\mathbb{R})$. Such $ e^a_i $ are called the \textit{vierbein.}
	\item A $SO(2,1)$-connection (or the \textit{spin connection}) one-form $\omega^a_{i \ b}$ on $M$. That is,  \begin{equation*}
	\omega^a_{i \ b} \in \Omega^1(M)\otimes\mathfrak{so(2,1)},
	\end{equation*} where $\omega_i$ is a Lie algebra-valued connection 1-form on $LM$ such that $\omega^a_i:=(e^a_i)^*\omega_i$.
	\item Compatibility conditions on the metric:
	\begin{equation*}
	g_{i j}=e^a_ie^b_j\eta_{ab} \ \ \text{ and } \ \ g^{ij}e^a_ie^b_j=\eta^{ab},
	\end{equation*} where $\eta$ denotes the usual Minkowski metric.
\end{enumerate}
The key is the following observation: In 3D  gravity, the vierbein and the spin connection can be considered as a pair $(e^a_i,  \omega^a_i) $ such that they could be combined into a certain gauge field $A$, with the gauge group $ISO(2,1)$. In brief, $ \omega^a_i $ in fact plays the role of the so-called $SO(2,1)$-part of the connection $A$ (the \textit{Lorentz-part}), while $e^a_i  $ corresponds to \textit{translation generators} of the Lie algebra $iso(2,1)$ of $ISO(2,1)$. For some technical reasons, the vierbein is supposed to be invertible \cite{Wit1}. Non-invertible ones can be important in the quantum theory \cite{Bertschinger,KKR}.

Employing Cartan's formalism, the usual Einstein-Hilbert action in Equation (\ref{EH action}) can be re-expressed as 
\begin{equation}\label{EHC action}
\mathcal{I}_{EH}'[e,\omega]= \displaystyle \int \limits_{M} e^a \wedge \bigg(\mathrm{d}\omega_a + \frac{1}{2}\epsilon_{abc} \omega^b\wedge\omega^c\bigg),
\end{equation} where $e^a=e^a_i\mathrm{d}x^i$ and $\omega^a=\frac{1}{2}\epsilon^{abc}\omega_{ibc}\mathrm{d}x^i$, together with an invariant non-degenerate, bilinear form $ \langle \cdot, \cdot \rangle $ on the Lie algebra $iso(2,1)$. More precisely, $ \langle \cdot, \cdot \rangle $ is defined via \begin{equation*}
\langle J_a, P_b \rangle=\delta_{ab} \ \ \langle J_a, J_b \rangle = \langle P_a, P_b \rangle =0,
\end{equation*} with the structure relations 
\begin{equation*}
[J_a,J_b]=\epsilon_{abc}J^c, \ \ [J^a,P^b]=\epsilon_{abc}P^c, \ \ [P^a, P^b]=0.
\end{equation*}

\noindent Define the gauge field $A^g$ as \begin{equation} \label{defining the gauge field A  via e,w}
A_i:=P_ae^a_i + J_a\omega^a_i,
\end{equation} where $A^g=A_i(x)dx^i$ in a local coordinate chart $x=(x^i)$ such that $P_a$ and $J_a$ correspond to translations and Lorentz generators, respectively.

 We then define the Chern-Simons theory, with gauge group $G=ISO(2,1)$, in accordance with the  bilinear form  $ \langle \cdot, \cdot \rangle $ and the gauge field $A_i$ above. Then, by using Equation (\ref{defining the gauge field A  via e,w}), the usual Chern-Simons action \begin{equation}\label{CS action}
CS[A]= \displaystyle \int\limits_{M}^{}\langle A ,  \mathrm{d}A + \frac{2}{3} A \wedge A\rangle 
\end{equation}
becomes exactly the same expression given in Equation (\ref{EHC action}). For computational details, see \cite{Carlip,KKR, Wit1}. Note that obtaining the same action functional is just one part of the whole story. We also need to verify that the diffeomorphism invariance of 3D gravity must also be encoded in some way in the $(e,\omega)$-formalism. 

As stressed explicitly in \cite{Carlip,KKR, Wit1}, the notions of invariance in these two  formalisms, i.e. the $ 2nd $-order (metric) formalism and the $1st$-order $ (e,\omega) $- formalism, are related to each other in some  sense. In fact, \textit{the invariance under spacetimes diffeomorphisms in the metric formalism corresponds to the invariance under the corresponding gauge transformations in  the $ (e,\omega) $- formalism. } 

Note that spacetime diffeomorphisms do not correspond to  independent gauge symmetries. They are indeed combinations of local Lorentz transformations and local translations \cite{KKR}.  Due to the rather expository nature of this section, we cross our fingers and avoid the derivation of these relations to save some space and time!  For a systematic treatment, we again refer to \cite{Carlip,KKR, Wit1}

\begin{remark} \label{remark_the equivalence between diffeomorphisms and  gauge transformations}
	Assuming the invertibility of vierbein, it should  be noted that the equivalence between diffeomorphisms and  gauge transformations is valid only for \textit{infinitesimally generated diffeomorphisms} and \textit{infinitesimal gauge transformations}. 
	
	Technically speaking, it has been shown by Witten \cite{Wit1} that diffeomorphisms in the connected component of the identity are equivalent to transformations combining local Lorentz transformations and local translations mentioned above. In other words, when we identify the phase space of 3D Einstein's theory with that of the associated 3D CS theory, \emph{infinitesimal CS gauge transformations are equivalent to infinitesimal diffeomorphisms}. This does not hold for ``large" diffeomorphisms, i.e. those are not infinitesimally generated. Large diffeomorphisms in fact require different treatment, and they are important for the quantum theory \cite{KKR}. Therefore, when we discuss an equivalence between some transformations, we always consider them ``infinitesimally generated".
\end{remark}Now, employing Cartan's formalism, one can reinterpret 3D gravity in the language of gauge theory (the \textit{Einstein-Cartan-Palatini gravity}). Assuming the special case, where $M=\Sigma\times \mathbb{R}$ and $\Sigma$ is a closed Riemann surface of genus $g>1$, the study of 3D gravity in fact boils down to that of $ISO(2,1)$ Chern Simons theory on $ M,$  with the action functional $CS$ given in Equation (\ref{CS action}), and the gauge group $ \mathcal{G} $ locally of the form $Map(U,ISO(2,1))$ that acts on the space $\mathcal{A}$ of $ISO(2,1)$-connections on $\Sigma$ in a natural way: For all $\rho\in \mathcal{G}$ and $A \in \mathcal{A}$, we set \begin{equation*}
A \bullet \rho := \rho^{-1}\cdot A \cdot \rho + \rho^{-1}\cdot \mathrm{d} \rho.
\end{equation*}
The corresponding E-L equation in this case turns out to be 
\begin{equation*}
F_{A}=0,
\end{equation*}
where $F_{A}=\mathrm{d} A+A \wedge A$ is the \textit{curvature two-form} on $M$ associated to $A.$ 

\subsubsection{\textbf{Equivalence of 3D quantum gravity with gauge theory}}
Using the gauge theoretic interpretation,   the physical phase space of 3D  gravity on $M=\Sigma\times (0,\infty)$ (with $ \Lambda=0 $) can be now realized as the moduli space $ \mathcal{M}_{flat} $ of flat $ISO(2,1)$-connections on $ \Sigma $. Then there is a natural map 
\begin{equation} \label{map phi}
\phi:\mathcal{E}(M)\longrightarrow \mathcal{M}_{flat}
\end{equation}
sending a flat pseudo-Riemannian metric $g$ to the corresponding flat gauge field $A^g$. 
It should be noted that $\phi$ \emph{need not to be invertible} in the first place. 

In quantum gravity, one seeks for the construction of  a quantum Hilbert space by quantizing the moduli space  $ \mathcal{E}(M) $ of solutions to the vacuum Einstein field equations on $M$. In the gauge theoretic formulation, on the other hand, one can actually quantize the phase space $\mathcal{M}_{flat}$ of the Chern-Simons theory associated to 3D gravity using the so-called \textit{geometric quantization formalism}. Thus, to construct  quantum theory of gravity, a possible strategy one may consider is as follows: \emph{First, we translate everything into a gauge theoretical framework, and view everything as gauge theory. Then, one may try to ``quantize" the corresponding gauge theory.}

 When $\Lambda=0$, as discussed above, the 3D gravity corresponds to the Chern-Simons theory with gauge group $G=ISO(2,1)$. 
Then we can discuss the quantized theories.
However, we end up with the following question: \emph{Are the resulting theories equivalent (in some sense)? } This leads to
	
\begin{definition}\label{equvalence}
	We say that  quantum gravity is \emph{equivalent} to gauge theory in the sense of the canonical formalism if the map $\phi$ in (\ref{map phi})  is an isomorphism. 
	
\end{definition}

\begin{remark}
	As noted in \cite[$\S$ 6]{Wit2}, the map $\phi$ in  (\ref{map phi}) happens to be invertible if every flat connection in  $\mathcal{M}_{flat}$ can be transformed into a form (uniquely up to  a diffeomorphism/local Lorentz transformation) in which the vierbein is invertible. In this regard, one has the following important result from \cite{Mess}, which is central for us.
\end{remark}
\begin{theorem} \label{thm_Mess reult summary}
	  For  vacuum Einstein gravity on $M=\Sigma\times (0,\infty)$, with $\Lambda=0$, and $\Sigma$  a closed Riemann surface of genus $g>1$, there exists an equivalence of quantum gravity with  gauge theory in the sense of Definition \ref{equvalence}.
	
\end{theorem}

\section{Proofs of the main results}
In what follows, we give some preliminary results, more explanations about the contents of  Theorems \ref{THM 1}, \ref{THM 1.1} \& \ref{THM-2},  and the proofs of these  results. 
\subsection{Proof of Theorem \ref{THM 1}} \label{prestack}
In this section, we will present the proof of Theorem \ref{THM 1}. Inspired by \cite{Benini}, we first prove the following result encoding the pre-stacky part of the construction for Einstein gravities.

\begin{lemma}
	\label{main result on being pre-stack}Given a  Lorentzian $ n $-manifold $M$, let $\mathcal{C}$ be the category of open subsets of  $M$ that are diffeomorphic to $\mathbb{R}^n$, 
	with morphisms being canonical inclusions between open subsets whenever $U\subset V$. Then the  functor $\mathcal{E}:\mathcal{C}^{op} \rightarrow Grpds $ described below 	
	  is a \textit{prestack}.
\end{lemma}	
	\begin{enumerate}
		\item \textbf{The action of $\mathcal{E}$ on the objects of $\mathcal{C}.$} For each object $U$ of $\mathcal{C}$, we have a groupoid $ \mathcal{E}(U)$ of Ricci-flat pseudo-Riemannian metrics on $U$,  where \emph{objects of $ \mathcal{E}(U) $} form the \textit{set} 
		\begin{equation*}
		FMet(U):= \big\{ g\in\Gamma(Sym^2(T^*U)): Ric(g)=0 \big\}.
		\end{equation*} 
		
		 \emph{Morphisms in $ \mathcal{E}(U)$}. Let $ Aut(Sym^2(T^*U)) $ be the group of automorphisms of the bundle $Sym^2(T^*U)$ over $U$, and $ \cdot \varphi$ denotes the action of $\varphi$ on the sections. We may sometimes use $\varphi^*$ for the action as well because of the natural motivation coming from the pulling-back operation. 
		
	By the \emph{action of $\varphi$}, we mean that 
	$\varphi$ is a bundle isomorphism making the  diagram 
	\begin{equation*}
	\begin{tikzpicture}
	\matrix (m) [matrix of math nodes,row sep=3em,column sep=2em,minimum width=2em] {
		Sym^2(T^*U)  & & Sym^2(T^*U) \\
		\ & U & \\};
	\path[-stealth]
	(m-1-1)	edge  node [above] {$ \varphi $} (m-1-3)
	(m-1-3) edge node [left] {$\pi$} (m-2-2)
	(m-1-1) edge node [right] {$\pi$} (m-2-2)
	(m-2-2) edge [bend left=40] node [right] {$g$} (m-1-1)
	(m-2-2) edge [bend right=40] node [left] {$g'$} (m-1-3);
	\end{tikzpicture}
	\end{equation*}	commute such that it acts on each fiber isomorphically; that is, for each $p \in U$ there is an isomorphism $\varphi_p: Sym^2(T^*_pU) \xrightarrow [] {\sim} Sym^2(T^*_pU)$ such that 
	\begin{equation}
	g'_p= \varphi_p(g_p).
	\end{equation} 
	
	In the context of GR, we consider particular automorphisms that are induced from \emph{infinitesimal diffeomorphisms} of the underlying spacetime. Following Remarks \ref{Remark_a variation induced from an infinitesimal diffeomorphism} and \ref{remark_the equivalence between diffeomorphisms and  gauge transformations}, we consider the infinitesimal diffeomorphisms acting on the metric $g$ as 
	\begin{equation*}
	g_{\mu \nu} (p) \longrightarrow g_{\mu \nu} (p)+\mathcal{L}_X g_{\mu \nu} (p),
	\end{equation*} where $ X \in \Gamma(TU) $ is a vector field over $U$,  $p\in U$, and $\mathcal{L}_X$ is the Lie derivative operator along $X$. Here,  $\mathcal{L}_X g$ serves as a variation $\delta g$ of $g$ as in Remark \ref{Remark_a variation induced from an infinitesimal diffeomorphism}. 
	
	Since any combinations of infinitesimal diffeomorphisms are also meaningful for our construction, considering the $C^{\infty}$-module generated by these infinitesimal generators over $U$, we formally define
	\begin{equation}
	L(U)= \big \langle \mathcal{L}_X : [\mathcal{L}_X, \mathcal{L}_Y] = \mathcal{L}_{[X,Y]}, \  X,Y \in \Gamma(TU) \big \rangle
	\end{equation} as an algebra over $C^{\infty}(U).$ Then we also have the following definition.
	\begin{definition}
		Let $g \in \mathcal{E}(U)$. By an \emph{infinitesimal diffeomorphism} $\varphi$, we mean a transformation determined by an element $\hat{\varphi} \in L(U)$ such that for each $p \in U$, $g$ transforms under this infinitesimal diffeomorphism as
	\begin{equation}\label{defn_fiberwise action}
g_{\mu \nu} (p) \xrightarrow {\varphi}   g_{\mu \nu} (p)+\hat{\varphi} (g_{\mu \nu}) (p).
	\end{equation} In this case, we also use $\cdot \varphi$ to denote the action of this infinitesimal transformation on the space of metrics. As mentioned before, if $ g_{\mu \nu} $ satisfies the corresponding Einstein field equations, so does its variation $ g_{\mu \nu}\cdot \varphi.$
	\end{definition}
	
	 \begin{definition}
	 	We  define a \textit{morphism} $ g\rightarrow g'$ in $\mathcal{E}(U)$	if there exists an infinitesimal diffeomorphism $\varphi$ such that $ g'=  g \cdot \varphi$. Then the set of morphisms is given by
	 	\begin{equation*}
	 	Hom_{\mathcal{E}(U)}(g,g')= \big \{ \varphi \in Aut(Sym^2(T^*U)): g'=  g \cdot \varphi   \text{ in } \mathcal{E}(U) \big \}.
	 	\end{equation*}
	 \end{definition}	
		We  denote a  morphism $ g\rightarrow g'$ in $ Hom_{\mathcal{E}(U)}(g,g')$ by $ (g,\varphi)$ or just by $\varphi$ if the meaning is clear from the context. It is also clear from the construction that all morphisms in $ Hom_{\mathcal{E}(U)}(g,g')$ are invertible.	
	
		\emph{Compositions in $\mathcal{E}(U)$.} Given two morphisms $ g\xrightarrow{\psi} g'$ and $ g'\xrightarrow{\varphi} g''$in $\mathcal{E}(U)$, using Equation (\ref{defn_fiberwise action}), the composition of two morphisms is given as  the standard composition
		\begin{equation*}
		(g \cdot \psi)\cdot \varphi : g \rightarrow g'',
		\end{equation*} where  $\hat{\varphi}, \hat{\psi} \in L(U)$ representing the corresponding operators. More precisely, w.l.o.g, we assume $ \hat{\varphi}\equiv\mathcal{L}_X $ and $ \hat{\psi}\equiv\mathcal{L}_Y $ for some vector fields $X, Y$ on $U$. Then one obtains
		\begin{align*}
		g''_{\mu \nu} (p)&= g'_{\mu \nu} (p)\cdot \varphi\\
		&= g'_{\mu \nu} (p)+ \mathcal{L}_X g'_{\mu \nu} (p) \\
		&= g_{\mu \nu} (p)+\mathcal{L}_Y g_{\mu \nu} (p) + \mathcal{L}_X\big(g_{\mu \nu} (p)+\mathcal{L}_Y g_{\mu \nu} (p)\big)\\
		&= g_{\mu \nu} (p) + \big(\mathcal{L}_Y+\mathcal{L}_X + \mathcal{L}_X\mathcal{L}_Y \big)g_{\mu \nu} (p),
		\end{align*} where $ \big(\mathcal{L}_Y+\mathcal{L}_X + \mathcal{L}_X\mathcal{L}_Y \big) \in L(U)$, and we get a morphism $ g \rightarrow g''$ represented by the element $ \psi + \varphi + (\varphi \circ \psi)$. Following our notation, we use $``\varphi\circ \psi"$ to represent the composition, by which we mean $g \cdot (\varphi\circ \psi)= (g \cdot \psi)\cdot \varphi. $

		\item \textbf{The action of $\mathcal{E}$ on the morphisms in $\mathcal{C}.$} To each morphism $ U \xrightarrow{f} V $ in $\mathcal{C}$, it assigns 
	a functor of categories $ \mathcal{E}(f):\mathcal{E}(V)\rightarrow\mathcal{E}(U),$ whose action on both objects and morphisms of $\mathcal{E}(V)$ is given as follows.
		\begin{enumerate}
			\item For any object $g\in Ob(\mathcal{E}(V))=FMet(V)$, we set $ g \xrightarrow{\mathcal{E}(f)} f^*g, $ where
			\begin{equation*}
		f^*g = g \circ f= g|_U\in FMet(U). 
			\end{equation*} Notice that the pullback of a Ricci-flat metric, in general, may no longer be Ricci-flat. But, in the case of particular canonical inclusions $f: U\hookrightarrow V$, with $U,V$ open subsets, if a metric $g$ is Ricci-flat on $V$, so is $f^*g$ on $U$. This is because $f^*g$ is just the restriction $g|_U$ of $g$ to the open subset $U$. 
			\item For any morphism $ (g, \varphi) \in Hom_{\mathcal{E}(V)}(g,g')$
			,  by the definition of $\varphi$,  there is an isomorphism  
			$ g_{\mu \nu} (p) \rightarrow g'_{\mu \nu} (p)=g_{\mu \nu} (p)+\hat{\varphi} (g_{\mu \nu}) (p)$ for all $p\in U \subset V$ as well. Therefore, due to the fiberwise action given in Equation (\ref{defn_fiberwise action}), $\varphi$ induces an isomorphism  $\varphi_p: Sym^2(T^*_pU) \xrightarrow [] {\sim} Sym^2(T^*_pU)$, and hence a subbundle isomorphism. Thus, we get 
			the desired transformation over the smaller open subset $U \text{ in } V$. We denote this induced isomorphism by $ \varphi|_U $ (or  $f^*\varphi$), and write   
			\begin{equation*}
			\bigg(g\xrightarrow[ (g,\varphi) ]{\sim}  g'\bigg) \xrightarrow{\mathcal{E}(f)} \bigg( g|_U \xrightarrow [(g|_U, \varphi|_U)]{\sim} g'|_U \bigg).
			\end{equation*} 
			
		\end{enumerate}
		\item Given a composition of morphisms $ U \xrightarrow{f} V \xrightarrow{h} W$ in $\mathcal{C}$, there exists an invertible natural transformation (arising naturally from properties of the action) \begin{equation*}
		\phi_{h \circ f}:\mathcal{E}(h \circ f) \Rightarrow \mathcal{E}(f) \circ \mathcal{E}(h),
		\end{equation*} together with the compatibility condition.
	\end{enumerate}

\paragraph{\textbf{Proof of Lemma \ref{main result on being pre-stack}}} 
It is enough to prove the following two statements: \begin{itemize}
	\item[\textit{(i)}] Given a composition of morphisms 
in $\mathcal{C}$
	
	\begin{equation*}
	{\footnotesize\begin{tikzpicture}[scale=3]
		\node  (a)  {$U$};
		\node  (b) [right=of a] {$V$};
		\node  (c) [right=of b] {$W$,};
		\node (midpoint) at ($(a)!.5!(c)$) {};
		\node (psi) [above=.4cm of midpoint] {$h \circ f$};
		\draw[->]  (a) -- node [below] {$ f $} (b);
		\draw[->] (b) -- node [below] {$ h $} (c);
		\draw[->] (a.north east) parabola bend (psi.south) (c.north west);
		\end{tikzpicture}
	}
	\end{equation*}
	
	there is an invertible natural transformation 
	\begin{equation*}
	{\small \begin{tikzpicture}
		\matrix[matrix of nodes,column sep=1.6cm] (cd)
		{
			$  \mathcal{E}(W) $ & $ \ \ \mathcal{E}(U). $ \\
		};
		\draw[->] (cd-1-1) to[bend left=50] node[label=above:$  \mathcal{E}(h \circ f) $] (U) {} (cd-1-2);
		\draw[->] (cd-1-1) to[bend right=50,name=D] node[label=below:$\mathcal{E}(f) \circ \mathcal{E}(h)$] (V) {} (cd-1-2);
		\draw[double,double equal sign distance,-implies,shorten >=10pt,shorten <=10pt] 
		(U) -- node[label=right: $\psi_{h,f}$] {} (V);
		\end{tikzpicture}}
	\end{equation*}
	\item[\textit{(ii)}] Given a composition  of morphisms 
	$U\xrightarrow{f} V \xrightarrow{h} W \xrightarrow{p} Z$ in $\mathcal{C}$, the associativity condition holds in the sense that the following diagram commutes:
	\begin{equation*}
	\begin{tikzpicture}
	\matrix (m) [matrix of math nodes,row sep=3em,column sep=4em,minimum width=2em] {
		\mathcal{E}(p \circ h \circ f)  & \mathcal{E}(h \circ f) \circ \mathcal{E}(p) \\
		\mathcal{E}(f) \circ	\mathcal{E}(p \circ h) & \mathcal{E}(f)\circ \mathcal{E}(h)\circ \mathcal{E}(p)\\};
	\path[-stealth]
	(m-1-1) edge [double] node [left] {{\small $ \psi_{p\circ h,f} $}} (m-2-1)
	edge [double] node [below] {{\small $ \psi_{p, h \circ f} $}} (m-1-2)
	(m-2-1.east|-m-2-2) edge [double] node [below] {} node [below] {{\footnotesize $id_{\mathcal{E}(f)}\star\psi_{p,h} $}} (m-2-2)
	(m-1-2) edge [double] node [right] {{\small $\psi_{h,f}\star id_{\mathcal{E}(p)}$}} (m-2-2);
	\end{tikzpicture}
	\end{equation*} 
\end{itemize}

\noindent \textit{Proof of \textit{(i)}.} First, we need to analyze \emph{objectwise}: For any object $g \in FMet(W)$, we have the following \textit{strong condition} by which the rest of the proof will become rather straightforward.
\begin{equation}\label{strongcondition}
\mathcal{E}(h \circ f)(g)= (h \circ f)^*g=f^*h^*g=\big( \mathcal{E}(f) \circ \mathcal{E}(h) \big)(g)\in FMet(U).
\end{equation}

\noindent As we have identical metrics $ \mathcal{E}(h \circ f)(g) = \mathcal{E}(f) \circ \mathcal{E}(h)(g)$ for any $g \in FMet(W)$, there is, by construction, 
a unique identity morphism \begin{equation*}
\big(\mathcal{E}(h \circ f)(g),id\big) \in Hom_{FMet(U)}\Big(\mathcal{E}(h \circ f)(g), \ \mathcal{E}(f) \circ \mathcal{E}(h)(g)\Big )
\end{equation*} such that \begin{equation*}
\mathcal{E}(h \circ f)(g) \xrightarrow[\big(\mathcal{E}(h \circ f)(g),id\big)]{\sim} \mathcal{E}(f) \circ \mathcal{E}(h)(g)= \big(\mathcal{E}(h \circ f)(g)\big)\cdot id =\mathcal{E}(h \circ f)(g).
\end{equation*}
Thus, one has the natural choice of a collection of morphisms $$\big \{m_g : \mathcal{E}(h \circ f)(g)\longrightarrow \mathcal{E}(f) \circ \mathcal{E}(h)(g)\big \},$$ where
$ m_g= \big(\mathcal{E}(h \circ f)(g),id\big) \text{ for all } g \in FMet(W)$. 
\vspace{5pt}

Just for the sake of notational simplicity, we  let \begin{equation*}
\mathcal{F}:=\mathcal{E}(h \circ f) \ \ and \ \ \mathcal{G}:=\mathcal{E}(f) \circ \mathcal{E}(h).
\end{equation*} Then for each morphism $g \xrightarrow[(g,\phi)]{\sim}g'$ in $\mathcal{E}(W)$, 
we get
\begin{align*} 
\mathcal{F}((g,\phi))&= \mathcal{E}(h \circ f)((g,\phi))\nonumber \\
&=\big((h \circ f)^*g,(h \circ f)^*\phi \big)\nonumber \\
&=\big(f^* \circ h^*(g),f^* \circ h^*(\phi) \big)\nonumber \\
&=\big(\mathcal{E}(f) \circ \mathcal{E}(h) (g),f^* \circ h^*(\phi) \big)\nonumber \\
&=\mathcal{E}(f)\circ \mathcal{E}(h) ((g,\phi)) \nonumber \\
&=\mathcal{G}((g,\phi)). \nonumber
\end{align*}

\newpage

The computation above implies the commutativity of the diagram 

\begin{equation*}
\begin{tikzpicture}
\matrix (m) [matrix of math nodes,row sep=3em,column sep=4em,minimum width=2em] {
	\mathcal{F}(g)  & \mathcal{F}(g') \\
	\mathcal{G}(g) & \mathcal{G}(g')\\};
\path[-stealth]
(m-1-1) edge  node [left] {{\small $ m_g $}} (m-2-1)
edge  node [above] {{\small $ \mathcal{F}((g,\phi)) $}} (m-1-2)
(m-2-1.east|-m-2-2) edge  node [below] {} node [below] {{\small $ \mathcal{G}((g,\phi)) $}} (m-2-2)
(m-1-2) edge  node [right] {{\small $m_{g'}$}} (m-2-2);
\end{tikzpicture}
\end{equation*}

 Furthermore, it is clear from Equation (\ref{strongcondition}) and from the construction that $\psi_{h,f}:\mathcal{E}(h \circ f)\Rightarrow \mathcal{E}(f) \circ \mathcal{E}(h)$ is in fact  invertible. In other words, we have $\mathcal{E}(h \circ f)\cong \mathcal{E}(f) \circ \mathcal{E}(h)$ up to  invertible natural transformation. 
 
 This completes the proof of $(i).$ 
\vspace{3pt}

\noindent \textit{Proof of \textit{(ii)}.} 
If $U\xrightarrow{f} V \xrightarrow{h} W$ in $\mathcal{C}$ is a composition, then we have
\begin{align}
& (1) \ \ \ \mathcal{F}(g)=\mathcal{G}(g) \ \ \text{for any} \ \ g \in Ob(\mathcal{E}(W)), \label{strong cond. objectwise}\\
&  (2) \ \ \ \mathcal{F}((g,\phi))=\mathcal{G}((g,\phi)) \ \text{for any} \ g \xrightarrow[(g,\phi)]{\sim}g' \ \ in \ \mathcal{E}(W), \label{strong cond. on morphisms}
\end{align} 
where $\mathcal{F}:=\mathcal{E}(h \circ f)$ and $ \mathcal{G}:=\mathcal{E}(f) \circ \mathcal{E}(h)$. 

Now, let $U\xrightarrow{f} V \xrightarrow{h} W \xrightarrow{p} Z$ be a composition of morphisms in $\mathcal{C}$, then it suffices to show that the associativity condition  holds both \textit{objectwise} and \textit{morphismwise}.
\begin{itemize}
	\item Let $g \in Ob(\mathcal{E}(Z))$, then we have 
	\begin{align*} 
	\mathcal{E}(p \circ (h \circ f))(g)&=\mathcal{E}(h \circ f)\circ\mathcal{E}(p)(g) \ \ \ from \ Eqn. (\ref{strong cond. objectwise}) \ with \ \psi_{p,h\circ f} \\
	&=\mathcal{E}(f) \circ \mathcal{E}(h) \circ \mathcal{E}(p) \ \ \ from \ Eqn. (\ref{strong cond. objectwise}) \ with \ \psi_{h, f} \star id_{\mathcal{E}(p)}\\
	&=\mathcal{E}(f) \circ \mathcal{E}(p \circ h) (g)\ \ \ from \ Eqn. (\ref{strong cond. objectwise}) \ with \ id_{\mathcal{E}(f)} \star \psi_{p, h}  \\
	&=\mathcal{E}((p \circ h) \circ f)(g) \ \ \ \ from \ Eqn. (\ref{strong cond. objectwise}) \ with \ \psi_{p\circ h, f}\nonumber
	\end{align*}This gives the commutativity of the diagram objectwise.
	
	\item Let $g \xrightarrow[(g,\phi)]{\sim}g'$ in $\mathcal{E}(Z)$, then we have 
	\begin{align*} 
	\mathcal{E}(p \circ (h \circ f))((g,\phi))&=\mathcal{E}(h \circ f)\circ\mathcal{E}(p)((g,\phi)) \ \ \ \ \ \ \ from \ Eqn. (\ref{strong cond. on morphisms}) \ with \ \psi_{p,h\circ f} \\
	&=\mathcal{E}(f) \circ \mathcal{E}(h) \circ \mathcal{E}(p)((g,\phi)) \ \ \ from \ Eqn. (\ref{strong cond. on morphisms}) \ with \ \psi_{h, f} \star id_{\mathcal{E}(p)}\\
	&=\mathcal{E}(f) \circ \mathcal{E}(p \circ h) ((g,\phi))\ \ \ \ \ \ \ from \ Eqn. (\ref{strong cond. on morphisms}) \ with \ id_{\mathcal{E}(f)} \star \psi_{p, h}  \\
	&=\mathcal{E}((p \circ h) \circ f)((g,\phi)) \ \ \ \ \ \ \ from \ Eqn. (\ref{strong cond. on morphisms}) \ with \ \psi_{p\circ h, f}. \nonumber
	\end{align*}	
\end{itemize} This completes the proof of $(ii)$, and hence that of Lemma \ref{main result on being pre-stack}.
\begin{flushright}
	$ \square $
\end{flushright}
\newpage

Let  
$ \mathcal{E}:\mathcal{C}^{op} \rightarrow  Grpds  $
  be the prestack defined in Lemma \ref{main result on being pre-stack}. 
  Now,   introducing a suitable site structure on $\mathcal{C}$, we  give 
the proof of Theorem \ref{THM 1}.


\vspace{3pt}

\paragraph{\bf Proof of Theorem \ref{THM 1}} \label{stacky} 
As in the case of \cite{Benini}, we first endow $\mathcal{C}$ with an appropriate Grothendieck topology $ \tau $ by defining the covering families $ \{U_i \rightarrow U \} $ of $U$ in $\mathcal{C}$ to be ``good" open covers $\{ U_i\subseteq U\}$ meaning that the fibered products 
$ U_{i_1 i_2 ... i_m}:=U_{i_1}\times_U U_{i_2} \times_U \cdot \cdot \cdot \times_U U_{i_m} $
 corresponding to the intersection of those open subsets $U_i$'s in $U$ are either empty or open subsets diffeomorphic to $\mathbb{R}^{n}$. Here each morphism  
$ U_i \hookrightarrow U $
is the canonical inclusion (and hence a morphism in $ \mathcal{C} $).  

Let $U$ be an object in $\mathcal{C}$. Given $\{U_i\subseteq U \} $ a covering family for $U$, one has the following cosimplicial diagram in $Grpds$
\begin{equation*}
\mathcal{E}(U_{\bullet}):= \bigg(\prod_i\mathcal{E}(U_i)\mathrel{\substack{\textstyle\rightarrow\\[-0.1ex]
		\textstyle\rightarrow \\[-0.1ex]}} \prod_{ij}\mathcal{E}(U_{ij})
\mathrel{\substack{\textstyle\rightarrow\\[-0.1ex]
		\textstyle\rightarrow \\[-0.1ex]
		\textstyle\rightarrow\\[-0.3ex]}} \prod_{ijk}\mathcal{E}(U_{ijk})
\mathrel{\substack{\textstyle\rightarrow\\[-0.1ex]
		\textstyle\rightarrow \\[-0.1ex]
		\textstyle\rightarrow \\[-0.1ex]
		\textstyle\rightarrow \\[-0.3ex]}} 
\cdot\cdot\cdot \bigg),
\end{equation*}  where  $U_{i_1 i_2 ...i_m}$ denotes the fibered product of $U_{i_n}$'s in $U$ as above. 
Note that for  a family \begin{equation*}
\{g_i  \} \ \ \text{in} \ \ \displaystyle \prod_i\mathcal{E}(U_i),
\end{equation*}  where $\mathcal{E}(U_i)=FMet(U_i)$, the coface maps $ d_0^1 $ and $d_1^1$ correspond to the suitable restrictions of each component
$ g_i |_{U_{ij}}  \text{ and }  g_j |_{U_{ij}}, $ respectively.

Now, it follows from the Lemma \ref{holim as groupoid} that $holim_{\small Grpds}(\mathcal{E}(U_{\bullet})) $ is indeed a particular groupoid and  can be defined as follows.
\begin{enumerate}
	\item \textit{Objects} are the pairs $  (x,h)  $, where 
$ 	x:= \{g_i \} \in \prod_i\mathcal{E}(U_i). $
That is, it is a family of Ricci-flat pseudo-Riemannian metrics on $ U_i $'s, along with the diagram

	\begin{equation*}
	\setlength{\unitlength}{0.9cm}
	\begin{picture}(12,4)
	\thicklines
	\put(9,3){\circle*{0.1}}
	\put(8.3,3.3){\small $\{g_k |_{U_{ijk}}\}$}
	\put(8,1){\circle*{0.1}}
	\put(6.7,0.4){\small $\{g_i |_{U_{ijk}}\}$}
	\put(10,1){\circle*{0.1}}
	\put(10.1,0.4){\small $\{g_j |_{U_{ijk}}\}$}
	\put(8,1){\line(1,0){2}}
	\put(8,1){\line(1,2){1}}
	\put(10,1){\line(-1,2){1}}
	\put(8.8,0.5){{\small $ \varphi_{ij} $}}
	\put(7.8,1.9){{\small $ \exists $}}
	\put(9.8,1.9){{\small $\varphi_{jk} $}}
	
	\put(1,2.2){$\bullet$}
	\put (0.4,1.6){$ \{g_i\} $}
	\qbezier(1.4,2.7)(2,3.5)(2.8,3.1)
	\put(2.8,3.1){\vector(1,0){0.1}}
	\qbezier(1.5,2)(2,1.1)(4.8,1.5)
	\put(4.8,1.5){\vector(1,1){0.1}}

	
	\put(3.1,3){\line(3,-2){2}}
	\put(4.1,2.5){\small$ \sim $}
	\put(3.1,3.2){\small$\{g_i |_{U_{ij}}\}$}
	\put(3.1,3){\circle*{0.1}}
	\put(5.2,1.90){\small $\{g_j |_{U_{ij}}\}$}
	\put(5.1,1.66){\circle*{0.1}}
	\qbezier(4.8,2.8)(6,4.5)(8,2.5)
	\put(8,2.5){\vector(1,-1){0.1}}
	\put(5.5,3.8){ \small $ d_1^2 $}
	
	\end{picture}
	\end{equation*}where $g_j |_{U_{ij}}= g_i |_{U_{ij}} \cdot \varphi_{ij}$ 
	for some $ \varphi_{ij}\in Aut(Sym^2(T^*U_{ij})). $ The ``triangle" on the RHS of  the diagram above implies that for all $i,j,k$, we have 
	\begin{align}
	g_k |_{U_{ijk}} & =  g_j |_{U_{ijk}} \cdot \varphi_{jk} \nonumber\\
	& = (g_i |_{U_{ijk}} \cdot \varphi_{ij}) \cdot \varphi_{jk} \nonumber\\
	& =  g_i |_{U_{ijk}} \cdot (\varphi_{jk} \circ \varphi_{ij}).
	\end{align}
	
It means that there exists a morphism $\varphi_{ik}: g_i |_{U_{ijk}} \xrightarrow{\sim}g_k |_{U_{ijk}}$. Therefore, we define the morphism $h$ in $ \prod\mathcal{E}(U_{ij}) $ as a family \begin{equation*}
\Big \{ g_i |_{U_{ij}} \xrightarrow[(g_i |_{U_{ij}},\varphi_{ij})]{\sim} g_j |_{U_{ij}} :  g_j |_{U_{ij}}=g_i |_{U_{ij}} \cdot \varphi_{ij} \ \& \ \varphi_{ij}\in Aut(Sym^2(T^*U_{ij}))  \Big \},
	\end{equation*}  where $  g_k |_{U_{ijk}} =  g_i |_{U_{(U_{ij}}} \cdot (\varphi_{jk} \circ \varphi_{ij}) $ and $s_0^0(h):\{g_i\}\rightarrow \{g_i\}$, which is just the identity morphism. 
	
	As a remark, the conditions in the definition of the family $ \{h\} $ correspond to  those in Lemma \ref{holim as groupoid} (Eqns. (\ref{key lemma objects property a}) and (\ref{key lemma objects property b})). Therefore, \textbf{an object of $holim_{\small Grpds}(\mathcal{E}(U_{\bullet})) $} is of the form \begin{equation*}
	(x,h)=\Big(  \{g_i \in FMet(U_i)\}, \{ \varphi_{ij} \in Aut(Sym^2(T^*U_{ij}))  \}       \Big),
	\end{equation*}
	where $ \{g_i\} $ is an object in $ \prod \mathcal{E}(U_i) $, and for each $i,j$, $ \varphi_{ij} := (g_i |_{U_{ij}},\varphi_{ij}) $  is a morphism in $ \prod\mathcal{E}(U_{ij}) $ satisfying 
	\begin{align*}
	& (i) \ \  g_j |_{U_{ij}}=g_i |_{U_{ij}} \cdot \varphi_{ij},  \ \text{with} \ \varphi_{ij}\in Aut(Sym^2(T^*U_{ij})) ,\\
	& (ii) \ \  On \ U_{ijk}, \ \ \varphi_{ij}\circ\varphi_{jk} = \varphi_{ik} \ \ (the \ cocycle \ condition),\\
	& (iii) \ \ s_0^0(h):\{g_i\}\rightarrow \{g_i\}, \ the \ identity \ morphism. 
	\end{align*}
	
	In short, an object $\mathrm{\textbf{g}}:= \big(\{g_i\},\{\varphi_{ij}\}\big)$ in $holim_{\small Grpds}(\mathcal{E}(U_{\bullet}))$ is a collection $ \{g_i\} $ of Ricci-flat metrics over covering open subset $U_i$ of $U$, together with the transition maps  $ \{ \varphi_{ij}\}$ on the overlaps that satisfy the cocycle condition above.
	\vspace{3pt}
	
	\item \textit{A morphism} $ (x,h)\rightarrow(x',h') $ in $holim_{\small Grpds}(\mathcal{E}(U_{\bullet})) $
	 consists of the following data:
	
	\begin{enumerate}
		\item A morphism $x\xrightarrow{f}x'$ in $ \prod \mathcal{E}(U_i) $, such that 
	$ 	\{g_i\} \xrightarrow{\sim} \{g_i'\}, $
		where $ g_i,g_i' \in FMet(U_i)$ with $g_i'= g_i \cdot \varphi_i $ for some $\varphi_i \in Aut(Sym^2(T^*U_{i})).$
		\item For each $i,j$, a commutative diagram 
		\begin{equation} \label{morphisims are compatible with the trans maps}
		\begin{tikzpicture}
		\matrix (m) [matrix of math nodes,row sep=3em,column sep=4em,minimum width=2em] {
			g_i |_{U_{ij}} & g_i' |_{U_{ij}}\\
			g_j |_{U_{ij}} & g_j' |_{U_{ij}}\\};
		\path[-stealth]
		(m-1-1) edge  node [left] {{\small $ h=\varphi_{ij}$}} (m-2-1)
		edge  node [above] {{\small $\varphi_i|_{U_{ij}} $}} (m-1-2)
		(m-2-1.east|-m-2-2) edge  node [below] {} node [below] {{\small $ \varphi_j|_{U_{ij}} $}} (m-2-2)
		(m-1-2) edge  node [right] {{\small $h'=\varphi_{ij}'$}} (m-2-2);
		\end{tikzpicture}
		\end{equation}
		In fact, it follows from the fact that $ g_j |_{U_{ij}}= g_i |_{U_{ij}} \cdot \varphi_{ij}$ and $ g_j' |_{U_{ij}}= g_i' |_{U_{ij}} \cdot \varphi_{ij}'$, we have 
		\begin{align*}
		\big(g_i |_{U_{ij}} \cdot \varphi_i|_{U_{ij}}\big) \cdot \varphi_{ij}' =  g_j' |_{U_{ij}}. 
		\end{align*}
	On the other hand, one also has 
		\begin{align*}
		\big(g_i |_{U_{ij}} \cdot \varphi_{ij}\big) \cdot \varphi_j|_{U_{ij}}  =  g_j' |_{U_{ij}},
		\end{align*}
		which imply the commutativity of the diagram, and hence one can also deduce the following relation:
		\begin{align*}
		&\big(g_i |_{U_{ij}} \cdot \varphi_{ij}\big) \cdot \varphi_j|_{U_{ij}}    =  \big(g_i |_{U_{ij}} \cdot \varphi_i|_{U_{ij}}\big) \cdot \varphi_{ij}' \ \ \forall i,j \\
		& \Longleftrightarrow \nonumber\\
		& g_i |_{U_{ij}} \cdot\big( \varphi_j|_{U_{ij}} \circ \varphi_{ij} \big)   =  g_i |_{U_{ij}} \cdot \big(\varphi_{ij}'\circ \varphi_i|_{U_{ij}}\big) \ \ \forall i,j \\
		& \Longleftrightarrow \nonumber\\
		& \varphi_{ij}'= \varphi_j|_{U_{ij}} \circ \varphi_{ij} \circ  \varphi_i^{-1}|_{U_{ij}} \ \ \forall i,j
		\end{align*}

	\end{enumerate}
	
\end{enumerate}

\noindent Thus,  \textbf{a morphism} in $holim_{\small Grpds}(\mathcal{E}(U_{\bullet})) $ from $ \textbf{g}=\big(\{g_i\},\{\varphi_{ij}\}\big) $ to $\textbf{g}'= \big(\{g_i'\},\{\varphi_{ij}'\}\big) $ is a family 
\begin{equation}\label{relation for being a morphism}
\Big\{\varphi_i \in Aut(Sym^2(T^*U_{i})) : g_i'=g_i\cdot\varphi_i \ \& \ \varphi_{ij}'= \varphi_j|_{U_{ij}} \circ \varphi_{ij} \circ  \varphi_i^{-1}|_{U_{ij}} \Big\}
\end{equation} 

In short, a morphism $ \boldsymbol{\varphi}: \textbf{g}\rightarrow\mathrm{\textbf{g}}'$ in $holim_{\small Grpds}(\mathcal{E}(U_{\bullet})) $ is a collection $ \{\varphi_{i} \} $ of morphisms, with $\varphi_{i}\in Mor_{\mathcal{E}(U_i)}(g_i, g'_i)$,  such that the action is compatible with the corresponding transition maps in the sense of Diagram \ref{morphisims are compatible with the trans maps}.  

Now, for a covering family $\{U_i\subseteq U \}$  of $U$,  \textit{the canonical morphism}  
\begin{equation}\label{canonical morphism}
\Psi:\mathcal{E}(U)\longrightarrow holim_{\small Grpds}(\mathcal{E}(U_{\bullet}))
\end{equation} is defined as a functor of groupoids, where 
\begin{itemize}
	\item for each object $g$ in $FMet(U)$, \begin{equation*}
	g\xrightarrow{\Psi} \Big( \{g|_{U_i}\}, \{\varphi_{ij}=id \}\Big),
	\end{equation*} together with the trivial cocyle condition.
	\item for each morphism $g\xrightarrow[(g,\varphi)]{\sim}g\cdot\varphi$, with $\varphi\in Aut(Sym^2(T^*U_{ }))$, \begin{equation*}
	\big(g\xrightarrow[(g,\varphi)]{\sim}g\cdot\varphi\big) \xrightarrow{\Psi} \Big( \{\varphi_i:=\varphi|_{U_i} \}\Big),
	\end{equation*} where $\varphi|_{U_i}$ trivially satisfies the desired relation in Equation (\ref{relation for being a morphism}) for being a morphism in $holim_{\small Grpds}(\mathcal{E}(U_{\bullet})) $.
\end{itemize}
\begin{lemma} \label{lemma_fullyfaithfullandesstiallysurject}
$\Psi$ is a fully faithful and essentially surjective functor.
\end{lemma} 
 \pf

 $\Psi$ is \textit{essentially surjective}: Let $\mathrm{\textbf{g}}:= \big(\{g_i\},\{\varphi_{ij}\}\big)$ be an object in $holim_{\small Grpds}(\mathcal{E}(U_{\bullet}))$. Then we have a family of objects $ \{g_i \} $,  with the family of transition functions $\{\varphi_{ij}\}$ 
  satisfying the cocycle condition $ \varphi_{ij}\circ\varphi_{jk} = \varphi_{ik}   \text{ on }  U_{ijk}$, such that $$ g_j |_{U_{ij}}=g_i |_{U_{ij}} \cdot \varphi_{ij}.$$ 

We need to show that these are patched together to form a metric $g\in FMet(U)$. In fact,  our site structure on $\mathcal{C}$ consists of good covers for which the intersection of open subsets $U_i$'s in $U$ are either empty or open subsets diffeomorphic to $\mathbb{R}^{n}$. Also, $Sym^2(T^*U_{ })$ is a locally free sheaf over $U$. In this regard, the following fact is useful: \textit{All cocycles are trivializable on manifolds diffeomorphic to $\mathbb{R}^n$.}
	Therefore, we conclude that $ \{ \varphi_{ij} = id\} $ for all $i,j$. 
	
	Now, we have a trivial cocycle condition with $\varphi_{ij}=id$. It follows that $g_i$ is a section of the sheaf $ Sym^2(T^*U_{i}) $ over $U_i$ satisfying $g_j |_{U_{ij}}=g_i |_{U_{ij}}$ for all $i,j$. So, $g_i$'s are glued together by transition functions $\varphi_{ij}$, along with the trivial cocycle condition, to form $g\in FMet(U)$ so that $g|_{U_i}=g_i$ and $ \varphi|_{U_i} = \varphi_i $ for all $i.$ Therefore, $\Psi$ is \textit{essentially surjective.}
	\vspace{3pt}
	
$\Psi$ is \textit{fully faithful:} We need to show that the induced map \begin{equation*}
	\underline{\Psi}: Hom_{\mathcal{E}(U)}(g,g')\longrightarrow Hom_{holim_{\small Grpds}(\mathcal{E}(U_{\bullet}))}(\Psi(g),\Psi(g'))
	\end{equation*} is a bijection of sets. To this end, we consider the corresponding \emph{sheaf-Hom} $\mathcal{H}om(\mathcal{S}, \mathcal{S'})$, with $\mathcal{S}= \mathcal{S'}=Sym^2(T^*U_{ })$, where $\mathcal{H}om(\mathcal{S}, \mathcal{S'})$ is the collection of the data
	\begin{equation*}
	\mathcal{H}om(\mathcal{S}, \mathcal{S'})(V):=Mor(\mathcal{S} |_V, \mathcal{S'}|_V).
	\end{equation*} Here $ \mathcal{S} |_V $ denotes the restriction of the sheaf to the open subset $V\subset U$. Then, both injectivity and surjectivity of $ \underline{\Psi}$ follow from the fact that the \emph{sheaf-Hom} $\mathcal{H}om(\mathcal{S}, \mathcal{S'})$ is a sheaf over $U$.  Let us explain the details below.
	
	If we assume $\underline{\Psi}(\varphi)=id$, then  it means, by definition, $\varphi_i:=\varphi|_{U_i} = id$ for all $i$. By construction, it implies that each $\varphi|_{U_i} \in \mathcal{H}om(Sym^2(T^*U_{ }), Sym^2(T^*U_{ }))(U_i)$. Because \emph{sheaf-Hom} is a sheaf over $U$, we obtain $\varphi=id$, and hence injectivity of $ \underline{\Psi}$. 
	
	Now, let  $ \boldsymbol{\varphi}: \Psi(g)\rightarrow\Psi(g')$ be a a morphism in $holim_{\small Grpds}(\mathcal{E}(U_{\bullet}))$. Then it can be viewed as  a collection $ \{\varphi_{i} \} $ of morphisms  such that the action is compatible with the corresponding transition maps in the sense of Diagram \ref{morphisims are compatible with the trans maps}. Here both $ \Psi(g) $ and $\Psi(g')$ are collections of Ricci-flat metrics $ \{g|_{U_i}\} $ and $ \{g'|_{U_i}\}$, respectively, along with the trivial transition maps. Therefore, Diagram \ref{morphisims are compatible with the trans maps} with $\varphi_{ij}=\varphi'_{ij}=id$ implies that $\varphi_{j}|_{U_{ij}}=\varphi_{i}|_{U_{ij}}$, where each $ \varphi_{i} \in \mathcal{H}om(Sym^2(T^*U_{ }), Sym^2(T^*U_{ }))(U_i)$. Because \emph{sheaf-Hom} is a sheaf over $U$, we conclude that there exists $\varphi \in \mathcal{H}om(Sym^2(T^*U_{ }), Sym^2(T^*U_{ }))(U)$ such that $\varphi|_{U_i}=\varphi_{i}$. Equivalently, it means $\varphi \in Hom_{\mathcal{E}(U)}(g,g')$, with $ \underline{\Psi}(\varphi)= \{\varphi_{i} \}$. This proves  the desired surjectivity and  
	completes the proof. 
	\epf

From Lemma \ref{lemma_fullyfaithfullandesstiallysurject}, we conclude that the canonical morphism $\Psi$  in Equation (\ref{canonical morphism}) is a weak equivalence in $Grpds$, and this completes the proof of Theorem \ref{THM 1}. 
\begin{flushright}
	$ \square $
\end{flushright}

\begin{definition} \label{defn_moduli stack of GR}
	The stack $ \mathcal{E}:\mathcal{C}^{op} \rightarrow  Grpds $ constructed above
is called the \textit{moduli stack of solutions to the vacuum Einstein field equations on $M$},  with $\Lambda=0$. We sometimes call it directly the \emph{stack of Einstein gravity}.
\end{definition} 

\subsection{Proof of Theorem \ref{THM 1.1}} \label{section_proof on families}
In this section, we provide a sketch of the proof of Theorem \ref{THM 1.1}. In fact, after fixing our  notation and giving the explicit definitions, the result follows from Theorem \ref{THM 1} with some natural modifications.

\begin{definition}
	Denote by $Cart$ the category of cartesian spaces, where an object is an open subset of $\mathbb{R}^n$ that is diffeomorphic to $\mathbb{R}^n$, and morphisms are smooth maps. To turn $Cart$ into a site, we declare a cover of an object $U$ to be ``good covers" $\{U_i \rightarrow U \}$, i.e., open covers for which every intersection of those open subsets $U_i$'s in $U$ is either empty or diffeomorphic to $\mathbb{R}^{n}$. 
\end{definition} We are particularly interested in the site $Cart$ because objects in $Fun(Cart^{op}, \mathcal{C})$ can be viewed as the category of smoothly parametrized objects of $\mathcal{C}$ over cartesian spaces. As an example, any manifold $M$ can be considered as a functor $$Cart^{op} \rightarrow Sets, \ U \mapsto C^{\infty}(U,M).$$ 

Note also that in the proof of Theorem \ref{THM 1}, morphisms in the source category are all canonical inclusions, and hence pullbacks of  (Ricci-flat) metrics by these morphisms are just restrictions to some smaller open subsets, and hence still Ricci-flat. Therefore, for a ``family version" of this category, (fiberwise) open embeddings can be viewed as suitable substitutes. Moreover, we require our geometric structure (Lorentzian with Ricci-flatness) to vary in families parametrized over cartesian spaces \footnote{More details on geometric structures via stacks and on geometries in families can be found in \cite{GradyPavlov,LudewigStoffel}}.  

Therefore, throughout this subsection, we  work with sheaves on the site $ Fam_n $ of families of manifolds with $n$-dimensional fibers, together with fiberwise open embeddings. More precisely, we have

\begin{definition}
	Let $ Fam_n $ be the site, where an object, denoted by $M/U$, is a submersion $\pi:M\rightarrow U$ with $n$-dimensional fibers and $U$ an object in $Cart$, and a morphism $M/U \rightarrow M'/U'$ is a smooth bundle map that is a fiberwise open embedding.
	 
	Moreover, the site structure is determine by the covering families that are a collection of morphisms $\{ M_i/U_i\rightarrow M/U\}$ such that $\{M_i\}$ is an  open cover of $M.$ 
\end{definition}

\paragraph{\textbf{A sketch of the proof of Theorem \ref{THM 1.1}}} Denote by $\mathcal{E}^{fam}$ the presheaf on $Fam_n$	\begin{equation*} 
Fam_n^{op} \longrightarrow Grpds, \ \ M/U \mapsto  \mathcal{E}^{fam} (M/U),
\end{equation*} where $ Ob(\mathcal{E}^{fam} (M/U)):=\{g\in \Gamma(Sym^2(T^*(M/U))) :  Ric(g)=0 \} $.

Here $ T^*(M/S)$ is the relative cotangent bundle $ T^*(M/U)=Coker(T^*U\rightarrow T^*M)$, which allows us to define fiberwise versions (or ``families") of many familiar structures. Indeed, we are currently interested in (pseudo) Riemannian structures. In this regard, a pseudo-Riemannian metric $g$ on $M/U$ is a section of the relative bundle $ Sym^2(T^*(M/U)).$ In other words, for an object $\pi: M \rightarrow U$ in $Fam_n$, $g$ is a (Ricci-flat) pseudo-Riemannian metric on the vertical tangent bundle $ker(\pi_*) \subset TM$. Thus, for any parameter $u\in U$ and $p\in M_u:= \pi^{-1}(u)$, $g|_p$ is a  metric on $ker(\pi_{*,p}) \subset T_pM$.

Using the fact that an object of $ \mathcal{E}^{fam} (M/U) $ is a (Ricci-flat) pseudo-Riemannian metric on the vertical tangent bundle $ker(\pi_*) \subset TM$, \emph{morphisms} in the groupoid $ \mathcal{E}^{fam} (M/U) $ can be defined via particular automorphisms of $ Sym^2(T^*(M/U))$ induced by infinitesimal transformations as in Lemma \ref{main result on being pre-stack}. Likewise, composition can be defined by using similar arguments in Lemma \ref{main result on being pre-stack}.

Functoriality follows from the fiberwise nature of the current construction. Given a morphism $F: N/V \rightarrow M/U$ in $Fam_n$, we have a commutative diagram
\begin{equation*}
\begin{tikzpicture}
\matrix (m) [matrix of math nodes,row sep=3em,column sep=4em,minimum width=2em] {
	N  & M \\
	V & U\\};
\path[-stealth]
(m-1-1) edge  node [left] {{\small $\pi_V $}} (m-2-1)
edge  node [above] {{\small $ F $}} (m-1-2)
(m-2-1.east|-m-2-2) edge  node [below] {} node [below] {{\small $ f $}} (m-2-2)
(m-1-2) edge  node [right] {{\small $\pi_U$}} (m-2-2);
\end{tikzpicture}
\end{equation*} such that for each $v\in V $, $F_v: N_v\rightarrow M_{f(v)} $ is an open embedding. If $g$ is a Ricci-flat metric on $T(M/U)$, so is its pullback under fiberwise open embeddings. Therefore, using the diagram above, $F^*g$ gives a Ricci-flat metric on $T(N/V)$, and hence an object in   $ \mathcal{E}^{fam} (N/V). $ Likewise, a morphism $ \phi $ in $ \mathcal{E}^{fam} (M/U) $ can be pulled-back via $F$, and due to the fiberwise action of the morphisms, $F^*\phi$ gives a morphism in $ \mathcal{E}^{fam} (N/V).$ The other compatibility conditions are straightforward to check by following similar arguments in Lemma \ref{main result on being pre-stack}.

Finally, one can achieve the stackification of the prestack $ \mathcal{E}^{fam} $ by following more or less the same arguments in the proof of Theorem \ref{THM 1} ``fiberwisely", with some modifications (using families, fiberwise open embeddings, and the site structure above, etc.).

\subsection{Proof of Theorem \ref{THM-2}} \label{stacky equivalence}
As discussed before, the \textit{equivalence} between quantum gravity and  gauge theory holds if the phase spaces of GR and the associated gauge theory can be identified (cf. Definition \ref{equvalence}). In fact, Mess proved \cite{Mess} that this is possible for a particular setup  (cf. Theorem \ref{thm_Mess reult summary}).

Now, we would like  to show that once it exists, the equivalence induces an isomorphism between the corresponding moduli stacks. To this end, we shall first revisit \cite{Benini} and introduce a particular stack similar to $BG_{con}$ given in \cite[Example 2.11]{Benini}.  This helps us to view the space $\mathcal{M}_{flat}$ as a certain stack.

 Of course, we first need to introduce the ``flat" counterpart of this classifying stack $BG_{con}$ in a na\"{\i}ve way. Just for  simplicity, we use $\mathcal{M}$ for the flat case whose construction is the same as that of  $BG_{con}$. 
Keep also in mind that for the gravitational interpretation (with $\Lambda=0$), one requires to consider the case of $G=ISO(2,1)$. In this regard, we have the following lemma.

\begin{lemma} \label{flatversion of BG}
	Let $ \mathcal{C} $ be the category in Lemma \ref{main result on being pre-stack} such that $M$ is a Lorentzian 3-manifold topologically of the form $\Sigma\times \mathbb{R}$ with $\Sigma$ a closed Riemann surface of genus $g>1$. The  functor $ \mathcal{M}:\mathcal{C}^{op} \rightarrow \ Grpds $ described below     is a stack.
	
	\begin{enumerate}
		\item For each object $U$ of $\mathcal{C}$, $ \mathcal{M}(U) $ is a groupoid of flat $G$-connections on $U$, where objects are the elements of the \textit{set} 
		$ \Omega^1(U, \mathfrak{g})_{flat} $ of Lie algebra-valued 1-forms on $U$, with $ F_{A}=0 $, and morphisms form the set 
		\begin{equation*}
		Hom_{\mathcal{M}(U)}(A,A')=\{ \rho \in \mathcal{G}:A'=A\bullet \rho \},
		\end{equation*} 
		where the action of the gauge group $ \mathcal{G} $, which is locally of the form $C^{\infty}(U,G)$, on $\Omega^1(U, \mathfrak{g})_{flat}$  is defined as follows: 
		For all $\rho\in \mathcal{G}$ and $A \in \mathcal{A}$, we set  \begin{equation*}
		A \bullet \rho := \rho^{-1}\cdot A \cdot \rho + \rho^{-1}\cdot \mathrm{d} \rho.
		\end{equation*}
		We denote a morphism  $
		A\xrightarrow{\sim} A'=A\bullet \rho
	$  in $ Hom_{\mathcal{M}(U)}(A,A')$ by $ (A,\rho). $
		\vspace{3pt}
		
		\item To each morphism $ U \xrightarrow{f} V $ in $\mathcal{C}$, i.e. $f:U \hookrightarrow V$ with $U\subset V$, one assigns
	$ 	\mathcal{M}(V)\xrightarrow{\mathcal{M}(f)}\mathcal{M}(U). $		
		Here $ \mathcal{M}(f) $ is a functor of categories whose action on objects and on morphisms of $\mathcal{M}(V)$ is given as follows.
		\begin{enumerate}
			\item For any object $A\in \mathcal{M}(V)=\Omega^1(V, \mathfrak{g})_{flat}$,  we have
		$ 	A \xrightarrow{\mathcal{M}(f)} f^*A \ (=A|_U) $,
			where $ \mathcal{M}(f)(A):= f^*A \in \Omega^1(U, \mathfrak{g})_{flat}. $ Here we use the fact that the pullback (indeed the restriction to an open subset $U$ in our case) of a flat connection in the sense that $F_A=0$ is also flat. 
			\item For any morphism $ (A,\rho) \in Hom_{\mathcal{M}(V)}(A,A') $ with $\rho \in \mathcal{G}$ such that $ A'=A \bullet \rho $,  it follows from the fact that 
			\begin{equation} \label{pullback of gauged connection}
			f^*(A \bullet \rho)=f^*A \bullet f^*\rho,
			\end{equation} where $f^*\rho=\rho \circ f \in C^{\infty}(U,G)$, we conclude that $ f^*(A \bullet \rho) $ lies in the orbit space of $f^*A$. Hence we get   \begin{equation*}
			\bigg(A\xrightarrow[ (A,\rho) ]{\sim}  A'=A \bullet \rho\bigg) \xrightarrow{\mathcal{M}(f)} \bigg( f^*A \xrightarrow [(f^*A,\rho \circ f)]{\sim} f^*(A\bullet \rho)=f^*A \bullet f^*\rho  \bigg),
			\end{equation*} where $ \mathcal{M}(f)(A,\rho):=(f^*A,f^*\rho) $ is a morphism in $\mathcal{M}(U). $ Note that Equation (\ref{pullback of gauged connection})  can indeed be proven by just \textit{local computations} of the pullback of a connection $ A $ together with the action $A \bullet \rho.$
		\end{enumerate}
		
	\end{enumerate}
	
\end{lemma}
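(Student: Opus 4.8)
\noindent\textit{Proof proposal.} The plan is to follow, essentially line by line, the argument used for Theorem \ref{main result on being a stack}, with the pair \emph{(Ricci-flat metric, diffeomorphism)} replaced throughout by the pair \emph{(flat connection, gauge transformation)}. First I would check that $\mathcal{M}$ is a prestack: beyond the data already described in items (1)--(2) of the statement, one needs for every composable pair $U \xrightarrow{f} V \xrightarrow{h} W$ in $\mathcal{C}$ an invertible natural transformation $\psi_{h,f}\colon \mathcal{M}(h\circ f)\Rightarrow \mathcal{M}(f)\circ\mathcal{M}(h)$ together with the associativity coherence. Exactly as in Proposition \ref{main result on being pre-stack}, these are supplied for free by the strong identities $(h\circ f)^{*}A = f^{*}h^{*}A$ and $(h\circ f)^{*}\rho = f^{*}h^{*}\rho$ (pullback along an inclusion being just restriction), so $\psi_{h,f}$ may be taken to be the identity and the coherence hexagon commutes trivially.

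Next I would equip $\mathcal{C}$ with the same Grothendieck topology used in Theorem \ref{main result on being a stack}: covering families of $U$ are the good open covers $\{U_i\subseteq U\}$ whose finite intersections $U_{i_1\cdots i_m}$ are empty or diffeomorphic to $\mathbb{R}^{(n-1)+1}$. For such a cover, Lemma \ref{holim as groupoid} makes $holim_{Grpds}\big(\mathcal{M}(U_{\bullet})\big)$ explicit: its objects are descent data $(\{A_i\},\{\rho_{ij}\})$ with $A_i\in\Omega^1(U_i,\mathfrak{g})_{flat}$, $\rho_{ij}\in\mathcal{G}(U_{ij})$, $A_j|_{U_{ij}}=A_i|_{U_{ij}}\bullet\rho_{ij}$, the cocycle condition $\rho_{ij}\cdot\rho_{jk}=\rho_{ik}$ on $U_{ijk}$ (condition (b) of Lemma \ref{holim as groupoid}, compatible with $A_k|_{U_{ijk}}=A_i|_{U_{ijk}}\bullet(\rho_{ij}\rho_{jk})$), and the normalization $s_0^0(h)=\mathrm{id}$; its morphisms $(\{A_i\},\{\rho_{ij}\})\to(\{A_i'\},\{\rho_{ij}'\})$ are families $\{\lambda_i\in\mathcal{G}(U_i)\}$ with $A_i'=A_i\bullet\lambda_i$ and $\rho_{ij}'=\lambda_i^{-1}|_{U_{ij}}\cdot\rho_{ij}\cdot\lambda_j|_{U_{ij}}$. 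The canonical comparison functor $\Psi\colon\mathcal{M}(U)\to holim_{Grpds}\big(\mathcal{M}(U_{\bullet})\big)$ sends $A$ to $(\{A|_{U_i}\},\{\mathrm{id}\})$ and a gauge transformation $\rho$ to the family $\{\rho|_{U_i}\}$, which visibly satisfies the required relations. By Definition \ref{definition of stack} and Theorem \ref{model strc of grpds}, it then remains to show that $\Psi$ is fully faithful and essentially surjective.

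Essential surjectivity is the step where the geometry enters, and it is the main obstacle. Given a descent datum $(\{A_i\},\{\rho_{ij}\})$, the family $\{\rho_{ij}\}$ is a $G$-valued $1$-cocycle; since each $U$ in a covering family is diffeomorphic to $\mathbb{R}^{(n-1)+1}$ and hence contractible, Lemma \ref{trivializability of cocycles} provides $\lambda_i\in\mathcal{G}(U_i)$ trivializing it, $\rho_{ij}=\lambda_i|_{U_{ij}}\cdot\lambda_j^{-1}|_{U_{ij}}$. Gauge-transforming the local forms by these $\lambda_i$ produces connections $A_i\bullet\lambda_i$ that agree on every overlap, so, $\Omega^1(\cdot,\mathfrak{g})_{flat}$ being a sheaf on $\mathcal{C}$, they patch to a single $A\in\Omega^1(U,\mathfrak{g})_{flat}$, and the family $\{\lambda_i\}$ is then an isomorphism $(\{A_i\},\{\rho_{ij}\})\xrightarrow{\sim}\Psi(A)$. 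For full faithfulness I would argue exactly as in Theorem \ref{main result on being a stack}, using that the gauge-group presheaf $\mathcal{G}=C^{\infty}(\cdot,G)$ is a sheaf on $\mathcal{C}$ (in place of the sheaf $C^{\infty}(\cdot,U)$ used there): separatedness gives injectivity of $Hom_{\mathcal{M}(U)}(A,A')\to Hom_{holim_{Grpds}(\mathcal{M}(U_{\bullet}))}(\Psi A,\Psi A')$, while the gluing axiom promotes a compatible family $\{\lambda_i\}$ (one has $\lambda_i|_{U_{ij}}=\lambda_j|_{U_{ij}}$ automatically from $\rho_{ij}'=\rho_{ij}=\mathrm{id}$) to a global $\rho$ with $\rho|_{U_i}=\lambda_i$, whence $A'=A\bullet\rho$ by the sheaf property of connections. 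This yields the weak equivalence required by Definition \ref{definition of stack}.

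I expect the only genuinely non-formal point to be the trivialization above: a priori a descent datum glues to a flat connection on a possibly nontrivial principal $G$-bundle over $U$, not on the trivial bundle underlying $\Omega^1(U,\mathfrak{g})_{flat}$, so one really needs the contractibility of the members of a covering family — built into the choice of Grothendieck topology — together with Lemma \ref{trivializability of cocycles}. A minor caveat worth recording is that the relevant gauge group is $C^{\infty}(\cdot,G)$ with $G=ISO(2,1)$ non-compact and disconnected; since the entire descent argument takes place over contractible opens, bundle triviality still holds and none of these features obstruct the proof.
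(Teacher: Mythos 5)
Your proposal is correct and follows exactly the route the paper indicates: the paper's proof of this lemma is a one-line deferral to the proofs of Proposition \ref{main result on being pre-stack} and Theorem \ref{main result on being a stack} (and to Examples 2.10--2.11 of \cite{Benini}), observing only that flatness is preserved under restriction, and your write-up is precisely that adaptation carried out in detail. If anything, your descent step (trivializing $\rho_{ij}=\lambda_i\lambda_j^{-1}$ and gauge-transforming the local $A_i$ before gluing) is executed more carefully than the corresponding passage in the paper's Theorem \ref{main result on being a stack}.
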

\newpage
\begin{proof}
	This is similar to the proofs of Lemma \ref{main result on being pre-stack} and Theorem \ref{THM 1}, with the special setup, where $n=3$ and $M$ as above. For a complete treatment to the generic case (i.e. without flatness requirement), see \cite[Examples 2.10 and 2.11]{Benini}.  For the flat case, on the other hand, one has exactly the same proof with $ \Omega^1(U, \mathfrak{g})_{flat} $ instead of $ \Omega^1(U, \mathfrak{g}) $ thanks to the fact that the pullback  of a flat connection by a canonical inclusion  $U\hookrightarrow V$ between open subsets is also flat. 
\end{proof}

Let us summarize our progress so far. 

\begin{enumerate}
	\item Before  stacky constructions, we already have an isomorphism of phase spaces 
	$ \phi:\mathcal{E}(M)\xrightarrow{\sim} \mathcal{M}_{flat} $
	 in the case of vacuum Einstein gravity, with the cosmological constant $\Lambda=0$,  on a Lorentzian 3-manifold $M=\Sigma\times (0,\infty)$, where $\Sigma$ is a closed Riemann surface of genus $g>1$.
	\item We define the \textit{moduli stack $\mathcal{E}$ of Einstein gravity}  (cf. Definition \ref{defn_moduli stack of GR}). 

	\item From Lemma \ref{flatversion of BG}, we introduce the \textit{classifying stack $ 	\mathcal{M} $ of principal G-bundles with flat connections} on $\Sigma$,
	where $\mathcal{C}$, in that case, involves  particular choices of dimension ($n=3$) and the form  $M$ $ :=\Sigma\times (0,\infty) $.	 
\end{enumerate}

Given  a closed Riemann surface $\Sigma$ of genus $g>1$, we now intend to show that if $\mathcal{C}$ is the category in Lemma \ref{main result on being pre-stack}, with  $M$ a Lorentzian 3-manifold of the form $\Sigma\times (0,\infty)$, then there exists \textit{an invertible natural transformation}
\begin{equation*}
{\small \begin{tikzpicture}
	\matrix[matrix of nodes,column sep=1.5cm] (cd)
	{
		$ \mathcal{C}^{op} $ & $ Grpds, $ \\
	};
	\draw[->] (cd-1-1) to[bend left=50] node[label=above:$ {\small \mathcal{E}} $] (U) {} (cd-1-2);
	\draw[->] (cd-1-1) to[bend right=50,name=D] node[label=below:${\small \mathcal{M}}$] (V) {} (cd-1-2);
	\draw[double,double equal sign distance,-implies,shorten >=10pt,shorten <=10pt] 
	(U) -- node[label=left:$\small \Phi$] {} (V);
	\end{tikzpicture}}
\end{equation*}
between these two stacks $\mathcal{E}$ and $\mathcal{M}$. This eventually provides a stacky extension of the isomorphism between the corresponding classical phase spaces.

\vspace{3pt}

\paragraph{\bf Proof of Theorem \ref{THM-2}}  
From the gauge theoretic realization of 3D gravity (with $\Lambda=0$) in Cartan's formalism, any solution to the vacumm Einstein field equations, with $\Lambda=0$, on any open subset of $M$ defines  a flat $ISO(2,1)$-connection. Thus, for any object $U$ in $\mathcal{C}$, we have a natural map
\begin{equation}\label{definintion of objectwise map}
\Phi_U: \mathcal{E}(U)\longrightarrow \mathcal{M}(U),
\end{equation}
which is indeed \textit{a functor of groupoids} defined as follows: 
\begin{enumerate}
	\item To each $g\in FMet(U)$, 
	 one assigns the corresponding flat $ISO(2,1)$-connection $A^g$ in $ \Omega^1(U, \mathfrak{iso(2,1)})_{flat} $ described by  Cartan's formalism. That is,
	\begin{equation} \label{a connection associated to the metric}
	g \xrightarrow{\Phi_U} A^g.
	\end{equation}
	\item As mentioned before, Cartan's formalism 
	encodes the symmetries of each theory in the sense that \textit{the diffeomorphism invariance of 3D gravity theory does correspond to the gauge invariance behaviour of the associated Chern-Simons theory (and vice versa)} \cite{Wit1}. It means that  equivalence classes $[g]$ of  flat pseudo-Riemannian metrics correspond to the gauge equivalence classes of the associated connections $[A^g]$. From Remark \ref{remark_the equivalence between diffeomorphisms and  gauge transformations}, we only consider diffeomorphisms in the connected component of the identity to ensure the desired equivalence. 
	
	In brief, for any $g'\in [g]$ over an open subset $U$, i.e. $g'=g\cdot \varphi$ for some automorphism $\varphi$ of $Sym^2(T^*U)$, the corresponding connections 
	$A^g  \text{ and }  A^{g\cdot \varphi}
	$ are also gauge equivalent, and hence lie in the same equivalence class (and vice versa). That is, there exists $\rho_{\varphi} \in \mathcal{G}$, an infinitesimal gauge transformation associated to $\varphi$, such that 
	$ A^{g\cdot \varphi} = A^g \bullet \rho_{\varphi}.$
	 In other words, such a correspondence can also be expressed as the  commutative diagram
	\begin{equation} \label{a gauge trans associated the diffeo}
	\begin{tikzpicture}
	\matrix (m) [matrix of math nodes,row sep=3em,column sep=3em,minimum width=2em] {
		g  & g\cdot \varphi \\
		A^g & A^{g\cdot \varphi} \\};
	\path[-stealth]
	(m-1-1) edge  node [left] {\small$ \Phi_U  $} (m-2-1)
	(m-1-1.east|-m-1-2)edge  node [above] {{\small $ \varphi $}} (m-1-2)
	(m-2-1.east|-m-2-2) edge  node [below] {} node [below] {{\small $ \rho_{\varphi} $}} (m-2-2)
	(m-1-2) edge  node [right] {\small $ \Phi_U  $} (m-2-2);
	\end{tikzpicture}
	\end{equation}
	together with two morphisms (relating infinitesimal diffeomorphisms and infinitesimal gauge transformations)
	\begin{align} \label{diffeo and gauge trans correspondance}
	& Aut(Sym^2(T^*U)) \longrightarrow C^{\infty}(U,G), \ \ \ \varphi \mapsto \rho_{\varphi}, \\
	& C^{\infty}(U,G) \longrightarrow Aut(Sym^2(T^*U)), \ \ \ \rho \mapsto \varphi_{\varphi}. 
	\end{align}Note that $ Aut(Sym^2(T^*U)) $ is endowed with the usual composition, and the group operation on $ C^{\infty}(U,G) $ is given by the  pointwise multiplication. 
	\item To each morphism $(g,\varphi):g\longrightarrow g'$ in $Hom_{\mathcal{E}(U)}(g,g')$, $ \Phi_U  $ associates a morphism 
	\begin{equation} \label{morphismswise correspondence}
	A^g \xrightarrow[(A^g,\rho_{\varphi})]{\sim}A^g \bullet \rho_{\varphi} \ (=A^{g'}),
	\end{equation} 
	where $\rho_{\varphi} \in C^{\infty}(U,ISO(2,1))$ is a gauge transformation corresponding to $\varphi$ in accordance with Diagram \ref{a gauge trans associated the diffeo}. Therefore, for any morphism $f:U\hookrightarrow V$ in $\mathcal{C}$, using the map in (\ref{diffeo and gauge trans correspondance}), one also has the following commutative diagram.
	\begin{equation} \label{pullback and correspondence}
	\begin{tikzpicture}
	\matrix (m) [matrix of math nodes,row sep=3em,column sep=4em,minimum width=2em] {
		Aut(Sym^2(T^*V))  & Aut(Sym^2(T^*U)) \\
		C^{\infty}(V,ISO(2,1)) & C^{\infty}(U,ISO(2,1))\\};
	\path[-stealth]
	(m-1-1) edge  node [left] {{\small $   $}} (m-2-1)
	edge  node [above] {{\small $ f^* $}} (m-1-2)
	(m-2-1.east|-m-2-2) edge  node [below] {} node [below] {{\small $ f^* $}} (m-2-2)
	(m-1-2) edge  node [right] {{\small $   $}} (m-2-2);
	\end{tikzpicture}
	\end{equation}

	\item \textit{Functoriality}. Given a composition of morphisms 
	\begin{equation*}
	{\small
		\begin{tikzpicture}[scale=4]
		\node  (a)  {$g$};
		\node  (b) [right=of a] {$g\cdot\varphi$};
		\node  (c) [right=of b] {$(g\cdot\varphi)\cdot \psi$,};
		\node (midpoint) at ($(a)!.4!(c)$) {};
		\node (psi) [above=.4cm of midpoint] {$(g \cdot \varphi)\cdot \psi = g \cdot (\psi \circ \varphi)$};
		\draw[->]  (a) -- node [below] {$ \varphi $} (b);
		\draw[->] (b) -- node [below] {$ \psi $} (c);
		\draw[->] (a.north east) parabola bend (psi.south) (c.north west);
		\end{tikzpicture}
	}
	\end{equation*}
	we have the following commutative diagram 
	\begin{equation*}
	\begin{tikzpicture}
	\matrix (m) [matrix of math nodes,row sep=3em,column sep=4em,minimum width=2em] {
		g & g \cdot \varphi & (g \cdot \varphi)\cdot \psi = g \cdot (\psi \circ \varphi)\\
		A^g & A^{g \cdot \varphi} & A^{g \cdot (\psi \circ \varphi)},\\};
	\path[-stealth]
	(m-1-1) edge  node [left] {{\small $ $}} (m-2-1)
	(m-1-1.east|-m-1-2) edge  node [above] {{\small $ \varphi $}} (m-1-2)
	(m-1-2) edge  node [above] {{\small $\psi$}} (m-1-3)
	(m-2-1.east|-m-2-2) edge  node [above] {{\small $ \rho_{\varphi} $}} (m-2-2)
	(m-2-2) edge  node [above] {{\small $\rho_{\psi}$}} (m-2-3)
	(m-1-3) edge  node [right] {{\small $ $}} (m-2-3)
	(m-1-2) edge  node [right] {{\small $ $}} (m-2-2);
	\end{tikzpicture}
	\end{equation*}
	where the vertical maps are $\Phi_U$, and using the commutativity, \begin{equation*}
	A^g \bullet \rho_{\psi \circ \varphi} = A^{g \cdot (\psi \circ \varphi)}= A^{(g \cdot \varphi)\cdot \psi}=A^{g\cdot\varphi} \bullet \rho_{\psi}=(A^g \bullet \rho_{\varphi})\bullet \rho_{\psi}.
	\end{equation*}Then 
	 we obtain $ A^g \bullet \rho_{\psi \circ \varphi}=A^g \bullet (\rho_{\varphi}\rho_{\psi}) $, and hence $\rho_{\varphi} \rho_{\psi} =\rho_{\varphi\circ \psi},
	$. This gives the desired functoriality: \begin{equation*}
	\Phi_U(g,\varphi\circ \psi)=\rho_{\varphi\circ \psi}=\rho_{\varphi}\rho_{\psi}=\Phi_U(g,\varphi)  \Phi_U(g,\psi).
	\end{equation*} 
	
\end{enumerate}

Now, we need to show that for each morphism $f:U\rightarrow V$ in $\mathcal{C}$, i.e. $f:U \hookrightarrow V$ with $U\subset V$, we have the following commutative diagram.
\begin{equation} \label{desired commutative diagram}
\begin{tikzpicture}
\matrix (m) [matrix of math nodes,row sep=3em,column sep=4em,minimum width=2em] {
	\mathcal{E}(V)  & \mathcal{M}(V) \\
	\mathcal{E}(U) & \mathcal{M}(V)\\};
\path[-stealth]
(m-1-1) edge  node [left] {{\small $ \mathcal{E}(f) $}} (m-2-1)
edge  node [above] {{\small $ \Phi_V $}} (m-1-2)
(m-2-1.east|-m-2-2) edge  node [below] {} node [below] {{\small $ \Phi_U $}} (m-2-2)
(m-1-2) edge  node [right] {{\small $\mathcal{M}(f)$}} (m-2-2);
\end{tikzpicture}
\end{equation}

In fact, the commutativity follows from the definition of $\Phi_U$: Let $g \in FMet(V)$, then we get, from the construction and from the restriction functor $\cdot |_U$, 
the natural diagram 
\begin{equation} \label{restriction diagram}
\begin{tikzpicture}
\matrix (m) [matrix of math nodes,row sep=2em,column sep=3em,minimum width=2em] {
	g  & A^g \\
	g|_U & A^{g|_U} = A^g|_U.\\};
\path[-stealth]
(m-1-1) edge  node [left] { } (m-2-1)
(m-1-1.east|-m-1-2) edge  node [above] { } (m-1-2)
(m-2-1.east|-m-2-2) edge  node [below] {} node [below] { } (m-2-2)
(m-1-2) edge  node [right] { } (m-2-2);
\end{tikzpicture}
\end{equation}
Hence, a direct computation yields
\begin{align}
(\mathcal{M}(f) \circ \Phi_V)(g)& = f^*A^g \nonumber\\ 
&=A^g|_U \nonumber\\
&=A^{g|_U} \ \ \ \text{from (\ref{restriction diagram})}\nonumber \\
&=A^{f^*g} \nonumber\\
&=\Phi_U(f^*g) \nonumber\\
&=(\Phi_U \circ \mathcal{E}(f))(g),
\end{align} which gives an ``objectwise" commutativity of the diagram. 
Similarly, for any morphism \begin{equation*}
(g,\varphi):g\longrightarrow g\cdot \varphi=g' \text{ in } Hom_{\mathcal{E}(V)}(g,g'),
\end{equation*}and for each morphism  $f:U \hookrightarrow V$, one has another natural diagram again from the definition and from the restriction functor as above:
\begin{equation} \label{restriction of diagram for morphisms}
\begin{tikzpicture}
\matrix (m) [matrix of math nodes,row sep=2em,column sep=3em,minimum width=2em] {
	\varphi  & \rho_{\varphi} \\
	\varphi|_U & \rho_{\varphi|_U} = \rho_{\varphi}|_U\\};
\path[-stealth]
(m-1-1) edge  node [left] { } (m-2-1)
(m-1-1.east|-m-1-2) edge  node [above] { } (m-1-2)
(m-2-1.east|-m-2-2) edge  node [below] { } (m-2-2)
(m-1-2) edge  node [right] { } (m-2-2);
\end{tikzpicture}
\end{equation}
Therefore, we obtain 
\begin{align}
(\mathcal{M}(f) \circ \Phi_V)(g,\varphi)&= (f^*A^g,f^*\rho_{\varphi}) \nonumber\\ 
&=(A^{f^*g},\rho_{\varphi}|_U) \ \ \ \text{from (\ref{restriction diagram})}\nonumber\\
&=(A^{f^*g},\rho_{\varphi|_U})  \ \ \ \text{from (\ref{restriction of diagram for morphisms})}\nonumber\\
&=(A^{f^*g},\rho_{f^*\varphi}) \nonumber\\
&=\Phi_U(f^*g,f^*\varphi) \nonumber\\
&=(\Phi_U \circ \mathcal{E}(f))(g,\varphi),
\end{align} which implies the desired ``morphismwise" commutativity.

Therefore, $ \Phi $ defines a natural transformation between $\mathcal{E}$ and $\mathcal{M}$ via the collection of natural maps \begin{equation*}
\big\{\Phi_U: \mathcal{E}(U)\longrightarrow \mathcal{M}(U) \big\}_{U\in Ob(\mathcal{C})}
\end{equation*}  such that  for each morphism $f:U\rightarrow V$ in $\mathcal{C}$ the following  diagram commutes. \begin{equation*} 
\begin{tikzpicture}
\matrix (m) [matrix of math nodes,row sep=3em,column sep=4em,minimum width=2em] {
	\mathcal{E}(V)  & \mathcal{M}(V) \\
	\mathcal{E}(U) & \mathcal{M}(V)\\};
\path[-stealth]
(m-1-1) edge  node [left] {{\small $ \mathcal{E}(f) $}} (m-2-1)
edge  node [above] {{\small $ \Phi_V $}} (m-1-2)
(m-2-1.east|-m-2-2) edge  node [below] {} node [below] {{\small $ \Phi_U $}} (m-2-2)
(m-1-2) edge  node [right] {{\small $\mathcal{M}(f)$}} (m-2-2);
\end{tikzpicture}
\end{equation*}

The inverse construction, on the other hand, essentially uses Mess' result \cite{Mess}: For each object $ U $ in $\mathcal{C}$, the map $\Phi_U$ is indeed invertible and the inverse map
\begin{equation*}
\Phi_U^{-1}: \mathcal{M}(U) \rightarrow \mathcal{E}(U)
\end{equation*} is defined as follows: 

Let us first explain the role of   \cite{Mess}. Once we choose a hyperbolic structure on a closed orientable surface $\Sigma$ of genus $g>1$ and view it as a Riemannian surface, then  a flat connection $A$  defines the holonomy representation $H$ of this hyperbolic structure, and hence a Fuchsian representation \cite{Bradlow}. Thus, by Mess' theorem in \cite{Mess}, there exists a suitable flat pseudo-Riemannian manifold $M=\Sigma \times (0,\infty)$ whose flat structure given by a flat pseudo-Riemannian metric, say $g_A$. Moreover, its surface group representation agrees with $H$. Therefore, we have a well-defined assignment on objects \begin{equation*}
\Phi_U^{-1}: \mathcal{M}(U) \rightarrow \mathcal{E}(U), \ \ A\mapsto \ g_A
\end{equation*} such that  the corresponding flat connection $A^{g_A}$ associated with $g_A$ is exactly the connection we started with. This is because the surface group representations agree. That is, we have
\begin{equation*}
\Phi_U \circ \Phi_U^{-1}: A \longmapsto g_A \longmapsto A^{g_A} = A.
\end{equation*}

Likewise, one obtains $ \Phi_U^{-1} \circ \Phi_U |_{\mathcal{E}(U)} = id_{\mathcal{E}(U)}.$ Then, by using a similar analysis as above, it is rather straightforward to check that we have a well-defined assignment $ \Phi^{-1}_U $ on both objects and morphisms, together with an appropriate commutative diagram analogous to the one in Diagram \ref{desired commutative diagram}. Therefore, $ \Phi_U^{-1}$ is a functor of groupoids as well. 

By construction, $ \Phi^{-1} $ is indeed the natural transformation that serves as the inverse of  $\Phi$. This completes the proof of Theorem \ref{THM-2}.
\begin{flushright}
	$ \square $
\end{flushright}
\epf

\end{document}